\newtheorem{thm}{Theorem}[section]
\newtheorem{prop}{Proposition}[section]
\newtheorem{lem}{Lemma}[section]
\newtheorem{rem}{Remark}[section]
\newtheorem{cor}{Corollary}[section]
\newtheorem{expl}{Example}[section]
\newtheorem{defn}{Definition}[section]
\begin{document}

\title[Dirac operators]{Eigenfunctions at the threshold energies 
of magnetic Dirac operators
}

\thanks{${}^{\ddag}$ {\scriptsize Supported by 
     Grant-in-Aid for Scientific Research (C) 
    No.  21540193, 
   Japan Society for the Promotion of Science.}}

\maketitle

\centerline{\author{Yoshimi Sait\={o}${}^{\dag}$ and 
Tomio Umeda${}^{\ddag}$}}

\vspace{6pt}

\small{
\centerline{${}^{\dag}$\it Department of Mathematics, University of Alabama at Birmingham} 
 
\centerline{\it Birmingham, AL 35294, USA}
 
\vspace{6pt}

\centerline{${}^{\ddag}$\it Department of Mathematical Sciences,
University of Hyogo}
\centerline{\it Himeji 671-2201, Japan}

\vspace{6pt}
\centerline{${}^{\dag}$\it saito@math.uab.edu}
\vspace{4pt}
\centerline{${}^{\ddag}$\it umeda@sci.u-hyogo.ac.jp}
}

\vspace{12pt}

\begin{quote}
{\small 
Discussed are   $\pm m$ modes and $\pm m$ resonances 
of Dirac operators with vector potentials
$H_{\!A}= \alpha \cdot (D - A(x) ) + m \beta$.
Asymptotic limits  
of $\pm m$ modes at infinity 
are derived  when $|A(x)|\le C\langle x\rangle^{-\rho}$, $\rho > 1$, 
provided that 
$H_A$ has $\pm m$ modes.
In wider classes of 
vector potentials,
sparseness of the  vector
potentials which give rise to the $\pm m$ modes of $H_A$
are established.
It is proved that no $H_A$ has $\pm m$ resonances
if
$|A(x)|\le C\langle x\rangle^{-\rho}$, $\rho >3/2$.

\vspace{8pt}
\noindent
{\it Keywords}: Dirac operators, magnetic potentials, threshold
energies, threshold resonances, threshold eigenfunctions, zero modes.

\vspace{8pt}
\noindent
Mathematics Subject Classification 2000: 35Q40, 35P99, 81Q10

}
\end{quote}

\vspace{12pt}
\section{Introduction}

The introduction is devoted to exhibiting our results as well as 
to reviewing previous contributions in connection with 
the results in the present paper.

This paper 
is concerned with eigenfunctions and 
resonances  at the threshold energies
of Dirac operators with vector potentials
\begin{equation} \label{eqn:1-1}
H_{\!A}= \alpha \cdot (D - A(x) ) + m \beta , \quad D=\frac{1}{\, i \,}
\nabla_x,
\,\,\, x \in {\mathbb R}^3.
\end{equation}
Here $\alpha= (\alpha_1, \, \alpha_2, \, \alpha_3)$ is
the triple of  $4 \times 4$ Dirac matrices
\begin{equation*}\label{eqn:1-2}
\alpha_j = 
\begin{pmatrix}
 \mathbf 0 &\sigma_j \\ \sigma_j &   \mathbf 0 
\end{pmatrix}  \qquad  (j = 1, \, 2, \, 3)
\end{equation*}
with  the $2\times 2$ zero matrix $\mathbf 0$ and
the triple of  $2 \times 2$ Pauli matrices
\begin{equation*}\label{eqn:1-3}
\sigma_1 =
\begin{pmatrix}
0&1 \\ 1& 0
\end{pmatrix}, \,\,\,
\sigma_2 =
\begin{pmatrix}
0& -i  \\ i&0
\end{pmatrix}, \,\,\,
\sigma_3 =
\begin{pmatrix}
1&0 \\ 0&-1
\end{pmatrix},
\end{equation*}
and
\begin{equation*}\label{eqn:1-4}
\beta=
\begin{pmatrix}
I_2& \mathbf 0 \\ \mathbf 0 & -I_2
\end{pmatrix}.
\end{equation*}
The constant $m$ is assumed to be positive.

Throughout the present paper we assume that  each component
of the vector potential $A(x)=(A_1(x), \,A_2(x), \, A_3(x))$ 
is a real-valued 
measurable function. In
addition to this, we shall later impose four different sets of
 assumptions on
$A(x)$ under which the operator $-\alpha \cdot A(x)$
is relatively compact with respect to the free Dirac operator 
$H_0=\alpha \cdot D  + m \beta $. 
Therefore, under any set of assumptions to be made, 
the  magnetic Dirac operator 
$H_{\!A}$ is a self-adjoint operator in 
the Hilbert space $[L^2({\mathbb R}^3)]^4$, and
the essential spectrum 
of $H_{\!A}$ is given by the union of the intervals 
$(-\infty, \, -m]$ and $[m, \, +\infty)$:
\begin{equation} \label{eqn:1-5}
\sigma_{\rm{ess}}(H_{\!A}) =  (-\infty, \, -m] \cup  [m, \, +\infty) .
\end{equation}
By the {\it threshold energies} of $H_{\!A}$, we mean  the values $\pm m$, 
the edges of the
essential spectrum 
$\sigma_{\rm{ess}}(H_{\!A})$.

We shall see  in sections \ref{sec:AsymptoticLimits} -- \ref{sec:Structure}
 that
 the discrete spectrum of $H_{\!A}$ 
in the gap $(-m, \, m)$ is empty,
although we should like to mention that
this fact is well-known 
by the result of
Thaller
\cite[p. 195, Theorem 7.1]{Thaller} where smoothness of vector
potentials is assumed though. 
In other words,
there are no isolated eigenvalues with finite  multiplicity
 in the 
spectral gap $(-m, \, m)$.
In the present paper, this fact will be obtained as a by-product
 of 
Theorem \ref{thm:SSDO-main3} 
 in section \ref{sec:Supersymmetric}, 
where we shall deal with an abstract Dirac operator,
\textit{i.e.}, a supersymmetric Dirac operator. 
As a result, we shall  have 
\begin{equation*}
\sigma(H_{\!A})=
\sigma_{\rm{ess}}(H_{\!A}) =  (-\infty, \, -m] \cup  [m, \, +\infty) 
\end{equation*}
under any set of the assumptions on $A(x)$
of the present paper.

In relation with the relative compactness of $-\alpha \cdot A(x)$
with respect to $H_0$,
it is worthwhile  to mention a work by Thaller
\cite{Thaller1}, where he showed that (\ref{eqn:1-5}) is true under the
assumption that $|B(x)| \to 0$ as $|x| \to \infty$. Here
$B(x)$ denotes the magnetic field: $B(x)=\nabla \times A(x)$.
It is clear that 
the assumption that  $|B(x)| \to 0$ does not necessarily 
imply the relative compactness of $-\alpha \cdot A(x)$
with respect to $H_0$.
In Helffer, Nourrigat and Wang \cite{HelfferNourigatWang},
they showed that  (\ref{eqn:1-5}) is true under  much weaker
assumptions on $B(x)$, which do not even need the requirement 
that 
$|B(x)| \to 0$ as $|x| \to \infty$; see also 
\cite[\S 7.3.2]{Thaller}.

It is generally expected
 that eigenfunctions corresponding to
a discrete eigenvalue of $H_A$ decay exponentially at infinity (describing
bound states), and that (generalized) eigenfunctions corresponding to an
energy inside the continuous spectrum $(-\infty, \, -m] \cup  [m, \, +\infty)$
behave like the sum of a plane  
and a spherical waves at infinity (describing  scattering states).
At the energies $\pm m$, on which we shall focus in the present paper,
(generalized) eigenfunctions are expected to behave
like $C_0 + C_1 |x|^{-1} + C_2 |x|^{-2}$ at infinity,
where $C_j$, $j=1$, $2$, $3$, are constant vectors in ${\mathbb C}^4$.
If $C_0=0$ and $C_1\not=0$, then the (generalized) eigenfunctios 
become either of $\pm m$ resonances, and if  
$C_0=0$ and $C_1=0$, then the (generalized) eigenfunctios 
become either of $\pm m$ modes.
For the precise definitions of $\pm m$ resonances and 
$\pm m$ modes,  
see Definition \ref{defn:Mresonance} in section \ref{sec:Resonances}
and
Definition \ref{defn:Mmodes} in section \ref{sec:AsymptoticLimits}
respectively.
 As for the exponential decay of
eigenfunctions, we refer 
the reader to 
works by Helffer and Parisse \cite{HelfferParisse}, 
 Wang \cite{Wang}, and a recent work by Yafaev \cite{Yafaev}.
As for the generalized eigenfunctions 
corresponding to an
energy in $(-\infty, \, -m) \cup  (m, \, +\infty)$ we refer the reader to 
 Yamada \cite{Yamada-2}.

As mentioned above, our main concern is 
the threshold energies $\pm m$ of 
the magnetic Dirac operator $H_A$. 
These energies are
 of particular importance and of interest from
the physics point of  view. 
We
should like to mention  Pickl and D\"urr \cite{PicklDurr},
 and Pickl \cite{Pickl},
where they investigate generalized eigenfunctions 
not only at the energies $\pm m$ 
but also at the energies near
$\pm m$,
with the emphasis on the famous relativistic effect
of the pair creation of an electron and a positron. 
It is worthwhile to note that 
$\pm m$ modes and $\pm m$ resonances  play decisive
roles in their results.
In the same spirit as in 
  \cite{Pickl} and \cite{PicklDurr},
 Pickl and D\"urr \cite{PicklDurr2}  
 mention the possibility of  experimental 
   verifications
  of the pair creation by
  combining lasers and heavy ions fields. 
  Therefore, it is obvious that
  results on $\pm m$ modes and $\pm m$ resonances
  of magnetic Dirac operators 
  are useful to understand
  the physics of the pair creations in such laser fields;
  see \cite{PicklDurr2} for details.

The goal of the present paper
is to derive a series of new results on
 $\pm m$ modes and  $\pm m$ resonances of
    the magnetic Dirac operators $H_{\!A}$.
Precisely speaking, we shall study
 asymptotic behaviors at
infinity  of the $\pm m$ modes, 
show sparseness of vector 
  potentials which give rise to the $\pm m$ modes,
  and establish
  non-existence of $\pm m$ resonances.

According to Pickl \cite[Theorems 3.4, 3.5]{Pickl}, 
the behavior of the generalized eigenfuntions of Dirac operators  near criticality 
largely depends on whether Dirac operators with critical potentials have
 $\pm m$ resonances or not. 
 Since the modulus of their cirtical  potentials
 are less than or equal to
  $C (1+|x|)^{-2}$,  we can actually conclude 
 from Theorems \ref{thm:NER-1} and \ref{thm:NER-2} 
 in section \ref{sec:Resonances}  that the
 magnetic Dirac operators with the critical potentials have no
  $\pm m$ resonances.
However, one has to pay attention to
a slight difference between our definition of 
the threshold resonances (cf. Definition \ref{defn:Mresonance})
and theirs (cf. \cite[Definition 2.3 and the paragraph after it]{Pickl}).

Finally, we would like to mention that 
there is a striking difference between two and three dimensional
Dirac operators with magnetic fields
 at the threshold energies. Compare the results 
in section \ref{sec:Resonances} of the present paper with
those of Aharonov and Casher
\cite{AharonovCasher}, where their arguments indicate that
one can find magnetic Dirac operators in dimension two
which possess threshold resonances.
See also Weidl \cite[section 10]{Weidl}.

The plan of the paper is as follows.
In section \ref{sec:Supersymmetric}, 
 we shall prepare a few results on
 a supersymmetric Dirac 
operator, which will be used in all the later sections.
In section \ref{sec:AsymptoticLimits} we shall investigate
asymptotic behaviors at infinity of $\pm m$ modes
of $H_A$, provided that it has $\pm m$ modes.
Sparseness of the set of  
vector potentials
$A(x)$ which yield $\pm m$ modes of  $H_{\!A}$ will be discussed
in sections \ref{sec:Sparseness} and \ref{sec:Structure} in different regimes.
In section \ref{sec:Resonances} we shall prove that any $H_A$ does not have
$\pm m$ resonances under a stronger assumption 
than those made in the previous  sections. 
Finally in section  \ref{sec:Concludings}
 we shall give examples of vector potentials
$A(x)$ which yield $\pm m$ modes of
magnetic Dirac operators $H_{\!A}$, and 
shall show that these operators $H_{\!A}$ do not have 
$\pm m$ resonances.
Then we shall 
 propose an open question in relation with
  $\pm m$ resonances.


\section{Supersymmetric Dirac operators}  \label{sec:Supersymmetric}
This section is devoted to a discussion about
spectral properties of a  class
of supersymmetric Dirac operators.
We should like to remark that
our approach appears to be in the reverse 
direction in the sense that
 we start with two Hilbert spaces, and introduce
a supersymmetric Dirac operator on
the direct sum of the two Hilbert spaces.
We find this approach convenient for 
our purpose;
see  
Thaller \cite[Chapter 5]{Thaller} for the 
standard theory of the supersymmetric Dirac operator.

The supersymmetric Dirac operator $H$ which we shall consider
in the present paper
is defined as follows:
\begin{equation}   \label{eqn:SSDO-K2-Sec1}
H := 
\begin{pmatrix}
0  &  T^*  \\
T  &  0
\end{pmatrix}
 + 
m  \begin{pmatrix}
I  &  0  \\
0  &  -I
\end{pmatrix} 
\;\; \mbox{ on } \;
\mathcal K = \mathcal H_+  \oplus \mathcal H_-,
\end{equation}
where $T$ is a densely defined 
 operator from a Hilbert space $\mathcal H_+$ to  
another Hilbert space $\mathcal H_-$,
$m$ is a positive constant,
and the identity operators in $\mathcal H_+$ and
$\mathcal H_-$ are both denoted by $I$ with an abuse
of notation. 
We recall that 
 the domain 
of $H$ is given by
 ${\mathcal D}(H)=\mathcal D(T) \oplus \mathcal D(T^*)$,
and that the inner
product of the Hilbert space $\mathcal K$
 is
defined by 
\begin{equation}  \label{eqn:SSDO-K4-Sec1}
(f, \, g)_{\mathcal K} 
 := ( \varphi^+, \, \psi^+)_{\mathcal H_+}
 +
( \varphi^-, \, \psi^-)_{\mathcal H_-}
\end{equation}
for
\begin{equation}    \label{eqn:SSDO-K5-Sec1} 
f= 
\begin{pmatrix}
\varphi^+  \\
\varphi^-
\end{pmatrix}, \quad
g=
\begin{pmatrix}
\psi^+  \\
\psi^-
\end{pmatrix}  \in \mathcal K .
\end{equation}
The first  term
on the right hand side of 
(\ref{eqn:SSDO-K2-Sec1})
is called 
the supercharge and
 the second term
the involution, and denoted by $Q$ and  $\tau$ respectively:
\begin{equation}   \label{eqn:SSDO-K3-Sec1}
Q = 
\begin{pmatrix}
0  &  T^*  \\
T  &  0
\end{pmatrix}, \quad
\tau = 
\begin{pmatrix}
I  &  0  \\
0  &  -I
\end{pmatrix}.
\end{equation}

We now state the main results in
this section, which are about the nature of 
eigenvectors of the supersymmetric Dirac
operator (\ref{eqn:SSDO-K2-Sec1}) at the 
eigenvalues $\pm m$.
We  mention that
$T$ does not need  to be a closed operator,
and that $T^*$ does not need to be densely
defined, 
because we only focus 
 on the eigenvalues $\pm m$ of $H$
and the corresponding eigenspaces.
In the standard theory of 
the supersymmetric Dirac operator, $T$ is assumed to be a densely
defined closed operator and $T^*$ needs to be
densely defined; cf. \cite[\S 5.2.2]{Thaller}.

We  should like to draw attention to the fact
 that
Theorems  \ref{thm:SSDO-main1},
\ref{thm:SSDO-main2}  below
are simply abstract restatements of
Thaller \cite[p. 195, Theorem
7.1]{Thaller}, where he dealt with 
the magnetic Dirac operators under the assumption
that $A_j \in C^{\infty}$. 
 From the mathematically rigorous 
point of view, it is not appropriate to apply 
 \cite[Theorem
7.1]{Thaller} to
the magnetic Dirac operators with 
non-smooth vector potentials. 
However, 
the vector potentials  we shall treat 
in sections \ref{sec:AsymptoticLimits} -- \ref{sec:Structure}
 are not smooth. 
In particular
 we shall deal, in section \ref{sec:Sparseness},
 with vector potentials 
 which can have local singularities.
 In this case, even self-adjointness of the magnetic
 Dirac operators is not trivial.
 Hence
 \cite[Theorem 7.1 ]{Thaller} is not applicable to this case.
These are 
the reasons why we need to generalize 
and restate  \cite[Theorem 7.1 ]{Thaller} 
in an abstract setting.

\vspace{4pt}

\begin{thm} \label{thm:SSDO-main1}
Suppose that $T$ is a densely defined  operator from $\mathcal H_+$ to
$\mathcal H_-$.  Let $H$  be a
supersymmetric Dirac operator defined by {\rm(\ref{eqn:SSDO-K2-Sec1})}. 

\vspace{4pt}

{\rm (i)} If $f= {}^t (\varphi^+ , \, \varphi^-)
\in \mbox{\rm Ker} (H -m)$,
then $\varphi^+ \in \mbox{\rm Ker}(T)$ and $\varphi^-=0$.

\vspace{4pt}

{\rm (ii)} Conversely, if $\varphi^+ \in \mbox{\rm Ker}(T)$, then
$f= {}^t (\varphi^+ , \, 0)
\in \mbox{\rm Ker} (H -m)$.
\end{thm}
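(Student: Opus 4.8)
\medskip

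The plan is to exploit the $2\times 2$ block structure of $H-m$ directly. Writing out $H$ from \eqref{eqn:SSDO-K2-Sec1}, for $f = {}^t(\varphi^+,\varphi^-)\in \mathcal D(H)=\mathcal D(T)\oplus\mathcal D(T^*)$ one has
\begin{equation*}
(H-m)f = {}^t\bigl(T^*\varphi^- , \; T\varphi^+ - 2m\varphi^-\bigr).
\end{equation*}
So the equation $(H-m)f=0$ is equivalent to the pair of conditions $T^*\varphi^- = 0$ and $T\varphi^+ = 2m\varphi^-$. For part (ii) this is essentially immediate: if $\varphi^+\in\mathrm{Ker}(T)$, then $f={}^t(\varphi^+,0)$ lies in $\mathcal D(T)\oplus\mathcal D(T^*)$ (note $0\in\mathcal D(T^*)$ always), and both conditions $T\varphi^+=0=2m\cdot 0$ and $T^*0=0$ hold, so $f\in\mathrm{Ker}(H-m)$.

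\medskip

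For part (i) the two conditions give $\varphi^+\in\mathcal D(T)$, $\varphi^-\in\mathcal D(T^*)$, $T^*\varphi^-=0$, and $T\varphi^+=2m\varphi^-$. The key step is to show $\varphi^-=0$; once that is known, $T\varphi^+ = 2m\varphi^- = 0$ gives $\varphi^+\in\mathrm{Ker}(T)$ immediately. To see $\varphi^-=0$, I would pair the second condition with $\varphi^-$ itself:
\begin{equation*}
2m\,\|\varphi^-\|_{\mathcal H_-}^2 = (T\varphi^+,\varphi^-)_{\mathcal H_-} = (\varphi^+, T^*\varphi^-)_{\mathcal H_+} = (\varphi^+,0)_{\mathcal H_+} = 0,
\end{equation*}
using the definition of the adjoint $T^*$ (which is legitimate precisely because $\varphi^-\in\mathcal D(T^*)$, so the identity $(T\varphi^+,\varphi^-)=(\varphi^+,T^*\varphi^-)$ holds for all $\varphi^+\in\mathcal D(T)$). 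Since $m>0$, this forces $\|\varphi^-\|_{\mathcal H_-}=0$, i.e. $\varphi^-=0$.

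\medskip

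The only point that needs a little care, and the place I expect the main (minor) obstacle, is the bookkeeping about domains: one must check that membership of $f$ in $\mathcal D(H)$ together with $(H-m)f=0$ legitimately yields $\varphi^-\in\mathcal D(T^*)$ so that the adjoint identity can be invoked — this is exactly what the description $\mathcal D(H)=\mathcal D(T)\oplus\mathcal D(T^*)$ supplies, and it is the reason the statement does not require $T$ closed or $T^*$ densely defined. Everything else is a one-line computation. I would present part (ii) first since it is the cleaner direction, then part (i), organizing the latter around the displayed pairing identity above.
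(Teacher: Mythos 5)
Your proposal is correct and follows essentially the same route as the paper: both extract the two conditions $T^*\varphi^-=0$ and $T\varphi^+=2m\varphi^-$ from the block structure and then use the adjoint pairing $(T\varphi^+,\varphi^-)_{\mathcal H_-}=(\varphi^+,T^*\varphi^-)_{\mathcal H_+}$ (legitimate since $\varphi^+\in\mathcal D(T)$, $\varphi^-\in\mathcal D(T^*)$) to conclude; the paper merely phrases the computation as $\Vert T\varphi^+\Vert^2=0$ first and then $\varphi^-=(2m)^{-1}T\varphi^+=0$, whereas you kill $\varphi^-$ first, which is the same identity up to a factor of $2m$.
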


\vspace{4pt}

\begin{thm} \label{thm:SSDO-main2}
Assume that $T$ and $H$ are the same
 as in Theorem \ref{thm:SSDO-main1}.

\vspace{4pt}

{\rm (i)} If $f= {}^t (\varphi^+ , \, \varphi^-)
\in \mbox{\rm Ker} (H + m)$,
then $\varphi^+=0$ and $\varphi^- \in \mbox{\rm Ker}(T^*)$.

\vspace{4pt}

{\rm (ii)} Conversely, if $\varphi^- \in \mbox{\rm Ker}(T^*)$, then
$f= {}^t ( 0 , \, \varphi^-)
\in \mbox{\rm Ker} (H + m)$.
\end{thm}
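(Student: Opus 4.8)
The plan is to argue exactly as in the proof of Theorem~\ref{thm:SSDO-main1}, with the roles of the two summands of $\mathcal K$ interchanged. First I would record the block form
\begin{equation*}
H+m=\begin{pmatrix} 2m & T^{*}\\ T & 0\end{pmatrix}
\qquad\text{on}\qquad \mathcal D(H)=\mathcal D(T)\oplus\mathcal D(T^{*}),
\end{equation*}
so that for $f={}^{t}(\varphi^{+},\varphi^{-})\in\mathcal D(H)$ the equation $(H+m)f=0$ is equivalent to the pair of relations $2m\varphi^{+}+T^{*}\varphi^{-}=0$ and $T\varphi^{+}=0$. The second relation already yields $\varphi^{+}\in\mathrm{Ker}(T)$, and in particular $\varphi^{+}\in\mathcal D(T)$.

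To finish part~(i) it then suffices to show $\varphi^{+}=0$: once this is known, the first relation gives $T^{*}\varphi^{-}=0$, i.e.\ $\varphi^{-}\in\mathrm{Ker}(T^{*})$. For $\varphi^{+}=0$ I would pair the first relation with $\varphi^{+}$ in $\mathcal H_{+}$ to get $2m\,\|\varphi^{+}\|_{\mathcal H_{+}}^{2}=-(T^{*}\varphi^{-},\varphi^{+})_{\mathcal H_{+}}$; since $\varphi^{-}\in\mathcal D(T^{*})$ and $\varphi^{+}\in\mathcal D(T)$, the defining property of the adjoint gives $(T^{*}\varphi^{-},\varphi^{+})_{\mathcal H_{+}}=(\varphi^{-},T\varphi^{+})_{\mathcal H_{-}}=0$, and $m>0$ forces $\varphi^{+}=0$.

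For part~(ii), given $\varphi^{-}\in\mathrm{Ker}(T^{*})\subseteq\mathcal D(T^{*})$ I would simply put $f={}^{t}(0,\varphi^{-})$; then $f\in\mathcal D(T)\oplus\mathcal D(T^{*})=\mathcal D(H)$ because $0\in\mathcal D(T)$, and the block form gives $(H+m)f={}^{t}(T^{*}\varphi^{-},0)=0$, so $f\in\mathrm{Ker}(H+m)$.

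I do not anticipate a genuine obstacle; the computation is short and self-contained. The one point deserving a word of care is that $T^{*}$ is not assumed densely defined, so one should not try to deduce Theorem~\ref{thm:SSDO-main2} from Theorem~\ref{thm:SSDO-main1} by merely swapping $\mathcal H_{+}$ and $\mathcal H_{-}$ (that would amount to applying Theorem~\ref{thm:SSDO-main1} with $T^{*}$ in place of $T$, which is not legitimate). The direct argument above instead uses only the identity $(T^{*}\psi,\varphi)_{\mathcal H_{+}}=(\psi,T\varphi)_{\mathcal H_{-}}$ for $\psi\in\mathcal D(T^{*})$, $\varphi\in\mathcal D(T)$, which is valid because $T$ is densely defined.
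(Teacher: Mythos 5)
Your proof is correct and takes essentially the same route as the paper: the paper omits this proof, noting it is "quite similar" to that of Theorem \ref{thm:SSDO-main1}, and your argument is exactly that mirror, reducing $(H+m)f=0$ to $T\varphi^{+}=0$ and $T^{*}\varphi^{-}=-2m\varphi^{+}$ and then killing the cross term via the adjoint identity $(T^{*}\varphi^{-},\varphi^{+})_{\mathcal H_{+}}=(\varphi^{-},T\varphi^{+})_{\mathcal H_{-}}$, which is the same pairing trick used in the paper's proof of Theorem \ref{thm:SSDO-main1}. Your closing caution that one cannot just invoke Theorem \ref{thm:SSDO-main1} with $T^{*}$ in place of $T$ (since $T^{*}$ is not assumed densely defined) is well taken and explains why a direct, mirrored computation is the right way to supply the omitted proof.
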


\vspace{4pt}

As immediate consequences, we have 

\vspace{4pt}

\begin{cor}  \label{cor:SSDO-C1}
Assume that $T$ and $H$ are the same
 as in Theorem \ref{thm:SSDO-main1}. Then

\vspace{4pt}

{\rm(i)} $\mbox{\rm Ker}(H-m) = \mbox{\rm Ker}(T)\oplus \{ 0 \}$,
$\mbox{\rm dim} 
 \big( \mbox{\rm Ker} (H - m) \big) 
= 
\mbox{\rm dim} 
 \big( \mbox{\rm Ker} (T) \big)$.

\vspace{4pt}

{\rm(ii)} $\mbox{\rm Ker}(H+m) = \{ 0 \}\oplus\mbox{\rm Ker}(T^*)$,
$\mbox{\rm dim} 
 \big( \mbox{\rm Ker} (H + m) \big) 
= 
\mbox{\rm dim} 
 \big( \mbox{\rm Ker} (T^*) \big)$.
\end{cor}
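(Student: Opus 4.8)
The plan is to read off both statements directly from Theorems \ref{thm:SSDO-main1} and \ref{thm:SSDO-main2}, since the corollary is essentially a repackaging of those two results in terms of the full kernels rather than their components. For part (i), I would argue that the two inclusions $\mbox{\rm Ker}(H-m)\subseteq \mbox{\rm Ker}(T)\oplus\{0\}$ and $\mbox{\rm Ker}(T)\oplus\{0\}\subseteq\mbox{\rm Ker}(H-m)$ are precisely the content of parts (i) and (ii) of Theorem \ref{thm:SSDO-main1}, respectively. Indeed, if $f={}^t(\varphi^+,\varphi^-)\in\mbox{\rm Ker}(H-m)$, then Theorem \ref{thm:SSDO-main1}(i) gives $\varphi^+\in\mbox{\rm Ker}(T)$ and $\varphi^-=0$, so $f\in\mbox{\rm Ker}(T)\oplus\{0\}$; conversely, any element of $\mbox{\rm Ker}(T)\oplus\{0\}$ has the form ${}^t(\varphi^+,0)$ with $\varphi^+\in\mbox{\rm Ker}(T)$, which lies in $\mbox{\rm Ker}(H-m)$ by Theorem \ref{thm:SSDO-main1}(ii). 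This establishes the set equality, and part (ii) of the corollary follows in the same way from Theorem \ref{thm:SSDO-main2}.

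For the dimension equalities, I would observe that the map $\varphi^+\mapsto{}^t(\varphi^+,0)$ is a linear bijection from $\mbox{\rm Ker}(T)$ onto $\mbox{\rm Ker}(T)\oplus\{0\}=\mbox{\rm Ker}(H-m)$, and it is clearly isometric for the inner product \eqref{eqn:SSDO-K4-Sec1} since the $\mathcal H_-$-component contributes nothing. Hence the two subspaces have the same dimension (finite or infinite), which gives the asserted identity $\mbox{\rm dim}\big(\mbox{\rm Ker}(H-m)\big)=\mbox{\rm dim}\big(\mbox{\rm Ker}(T)\big)$. The analogous isometry $\varphi^-\mapsto{}^t(0,\varphi^-)$ handles the dimension claim in part (ii).

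I do not anticipate any genuine obstacle here: the corollary is a formal consequence of the two theorems, and the only point requiring the slightest care is noting that the direct-sum decompositions are orthogonal with respect to the inner product \eqref{eqn:SSDO-K4-Sec1}, so that the component-extraction maps are honest isometric isomorphisms and the dimension count is unambiguous even in the infinite-dimensional case. Accordingly, the write-up will be short, simply citing Theorems \ref{thm:SSDO-main1} and \ref{thm:SSDO-main2} for the two inclusions in each part and then invoking the evident isometry for the dimension statements.
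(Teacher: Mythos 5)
Your proposal is correct and matches the paper's treatment: the paper presents the corollary as an immediate consequence of Theorems \ref{thm:SSDO-main1} and \ref{thm:SSDO-main2}, which is exactly the two-inclusion argument you give, with the dimension equalities following from the evident isomorphism $\varphi^\pm\mapsto{}^t(\varphi^+,0)$ or ${}^t(0,\varphi^-)$. No gaps; your write-up is simply a spelled-out version of what the paper leaves implicit.
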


\vspace{4pt}

The eigenspaces
corresponding to the eigenvalues $\pm m$ of 
supersymmetric Dirac operators do not 
seem to have been 
explicitly formulated 
in the literature as in the form of 
Corollary \ref{cor:SSDO-C1}. 
It is straightforward from this formulation that
the eigenspaces of $H$ corresponding to
the eigenvalue $\pm m$ are independent of $m$.

\vspace{10pt}
\begin{proof}[{\bf Proof of Theorem \ref{thm:SSDO-main1} }]
We first prove assertion (i).  
Let $f= {}^t (\varphi^+ , \, \varphi^-)\in \mbox{\rm Ker} (H -m)$.
We then have
\begin{equation}   \label{eqn:SSDO-K2-pr}
\begin{pmatrix}
0  &  T^*  \\
T  &  0
\end{pmatrix}
\begin{pmatrix}
\varphi^+   \\
\varphi_-
\end{pmatrix}
 + 
m  \begin{pmatrix}
I  &  0  \\
0  &  -I
\end{pmatrix}
\begin{pmatrix}
\varphi^+   \\
\varphi_-
\end{pmatrix}
= m
\begin{pmatrix}
\varphi^+   \\
\varphi_-
\end{pmatrix},
\end{equation}
hence
\begin{equation}   \label{eqn:SSDO-K5}
\begin{cases}
T^* \varphi^- + m \varphi^+
&\!\!\!\!=m \varphi^+  \\
\noalign{\vskip 4pt}
T \varphi^+ - m \varphi^-
 &\!\!\!\! =m \varphi^-, 
\end{cases}
\end{equation}
which immediately implies that $T^*\varphi^-=0$ and $T\varphi^+=2m\varphi^-$.
It follows that
\begin{gather}  \label{eqn:SSDO-K6}
\begin{split}
\Vert T \varphi^+ \Vert^2_{{\mathcal H}_-}  
&= 
(T \varphi^+, \, T \varphi^+)_{{\mathcal H}_-}   \\
&= 
(T \varphi^+, \, 2m \varphi^-)_{{\mathcal H}_-}   \\
&= 
(\varphi^+, \, 2m \,T^* \! \varphi^-)_{{\mathcal H}_+}   \\
&=0.
\end{split}
\end{gather}
Thus we see that $\varphi^+ \in \mbox{Ker}(T)$, and that
$\varphi^-= (2m)^{-1} \, T \varphi^+ = 0$.

We next prove assertion (ii). 
Let $\varphi^+ \in \mbox{\rm Ker}(T)$ and
put $f:= {}^t (\varphi^+ , \, 0)$. 
Then it follows that
$Hf = {}^t (m\varphi^+ , \, T\varphi^+) 
= m \, {}^t (\varphi^+ , \, 0)=mf$.
\end{proof}

\vspace{5pt}

We omit the proof of Theorem \ref{thm:SSDO-main2}, which
 is quite similar to that of 
Theorem \ref{thm:SSDO-main1}.

In connection with our applications to the magnetic
Dirac operator $H_{\!A}$ 
in later sections, we should like to consider
the  case where the Hilbert space $\mathcal H_+$ coincides with
$\mathcal H_-$ and $T$ is self-adjoint ($T^*=T$).
In this case,
the supersymmetric Dirac operator $H$
becomes of the form
\begin{equation}   \label{eqn:SSDO-SA-1}
H = 
\begin{pmatrix}
0  &  T  \\
T  &  0
\end{pmatrix}
 + 
m  \begin{pmatrix}
I  &  0  \\
0  &  -I
\end{pmatrix}
\end{equation}
in the Hilbert space 
${\mathcal K}={\mathcal H}\oplus{\mathcal H}$,
and 
it follows from Theorems
\ref{thm:SSDO-main1} and  
\ref{thm:SSDO-main2} that
the operator $H$ of the form (\ref{eqn:SSDO-SA-1}) possesses of 
a simple but important equivalence:
\begin{gather}  \label{eqn:SSDO-K4}
\begin{split}
T \mbox{ has a zero mode}   
&\Longleftrightarrow  
\; H \mbox{ has an $m$ mode}   \\
{}&\Longleftrightarrow  \; H \mbox{ has a $-m$ mode}, 
\end{split}
\end{gather}
which is actually
a well-known fact: see Thaller \cite[p. 155, Corollary 5.14]{Thaller}.
Here we say that $T$ has a \textit{zero mode} if $0$  is 
 an eigenvalue of $T$.
In a similar manner, we say 
that 
$H$ has an  \textit{$m$ mode} 
(resp. a \textit{$-m$ mode})
if $m$
(resp. $-m$) is  an eigenvalue of $H$.
Furthermore, Theorems
\ref{thm:SSDO-main1} and  
\ref{thm:SSDO-main2} imply the following equivalence for
a zero mode $\varphi$ of $T$:
\begin{gather}  \label{eqn:SSDO-K4+1}
\begin{split}
 T\varphi =0 
\; 
\Longleftrightarrow  
\;  
H
\begin{pmatrix}
\varphi  \\
0  
\end{pmatrix}
=
m
\begin{pmatrix}
\varphi  \\
0  
\end{pmatrix}
\;
\Longleftrightarrow
\;
H
\begin{pmatrix}
0  \\
\varphi    
\end{pmatrix}
=
-m
\begin{pmatrix}
0 \\
\varphi   
\end{pmatrix}.
\end{split}
\end{gather}

We shall show in Theorem \ref{thm:SSDO-main3} below
that  
a sufficient condition for the fact that
\begin{equation}   \label{eqn:rev+1}
\sigma(H) =
\sigma_{\rm{ess}}(H) 
= (-\infty, \, -m] \cup [m, \, \infty)
\end{equation}
is given by the inclusion
$\sigma(T) \supset (0, \, \infty)$.
Therefore
 $\pm m$ are always threshold energies
of  the supersymmetric Dirac operator $H$,
provided that $\sigma(T) \supset (0, \, \infty)$.

\vspace{10pt}
\begin{thm} \label{thm:SSDO-main3}
Let $T$ be a self-adjoint operator
in the Hilbert space $\mathcal H$. Suppose that
$\sigma(T) \supset [0, \, +\infty)$.
Then  
$$
\sigma(H) =(-\infty, \, -m] \cup [m, \, +\infty).
$$
In particular,
$\sigma_d(H)=\emptyset$, i.e., 
the set of discrete eigenvalues of $H$ with finite 
multiplicity is empty.
\end{thm}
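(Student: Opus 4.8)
The plan is to pass to the squared operator $H^2$, which decouples into a direct sum of two copies of a self-adjoint operator built from $T$, and then to climb back down to $\sigma(H)$ itself with the help of a unitary symmetry.

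First I would exploit the hypothesis $T^*=T$: the supercharge $Q$ in (\ref{eqn:SSDO-SA-1}) is then self-adjoint on $\mathcal D(T)\oplus\mathcal D(T)$, and since $m\tau$ is bounded and self-adjoint, $H=Q+m\tau$ is self-adjoint there as well. Using $Q\tau=-\tau Q$ and $\tau^2=I$, the cross terms in $(Q+m\tau)^2$ cancel, so $H^2=Q^2+m^2=\begin{pmatrix}T^2+m^2&0\\0&T^2+m^2\end{pmatrix}$ on $\mathcal D(T^2)\oplus\mathcal D(T^2)$. Hence $\sigma(H^2)=\sigma(T^2)+m^2$, and since $T$ is self-adjoint, $\sigma(T^2)=\{\lambda^2:\lambda\in\sigma(T)\}$, which is always contained in $[0,\infty)$ and, by the hypothesis $\sigma(T)\supset[0,\infty)$, also contains $[0,\infty)$. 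Therefore $\sigma(T^2)=[0,\infty)$ and $\sigma(H^2)=[m^2,\infty)$.

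Next I would apply the spectral mapping theorem to $H$ and the continuous function $\mu\mapsto\mu^2$: $\{\mu^2:\mu\in\sigma(H)\}=\sigma(H^2)=[m^2,\infty)$. The inclusion ``$\subset$'' gives at once $\sigma(H)\subset(-\infty,-m]\cup[m,\infty)$. For the reverse inclusion I would introduce the unitary $W=\begin{pmatrix}0&-I\\I&0\end{pmatrix}$, which maps $\mathcal D(H)=\mathcal D(T)\oplus\mathcal D(T)$ onto itself and anticommutes with both $Q$ and $\tau$, so that $WHW^{-1}=-H$ and hence $\sigma(H)=-\sigma(H)$. Combining the two facts: for every $\mu\ge m$ we have $\mu^2\in\sigma(H^2)$, so at least one of $\mu,-\mu$ lies in $\sigma(H)$, and by the symmetry both do; the case $\mu\le -m$ is identical. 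This gives $\sigma(H)=(-\infty,-m]\cup[m,\infty)$, and since this set contains no isolated points, no eigenvalue of $H$ is isolated, so $\sigma_d(H)=\emptyset$ (equivalently $\sigma_{\rm ess}(H)=\sigma(H)$).

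The step I expect to be the main obstacle, or at least the only non-mechanical one, is the reverse inclusion: squaring forgets the sign, so the decoupling of $H^2$ cannot by itself distinguish $[m,\infty)$ from the full set $(-\infty,-m]\cup[m,\infty)$, and one genuinely needs the symmetry $\sigma(H)=-\sigma(H)$. If one preferred to avoid the unitary $W$, the alternative is to produce Weyl sequences for $H$ directly: from a Weyl sequence $\{\phi_n\}$ for $T$ at $\lambda\ge 0$ one verifies that ${}^t(\phi_n,\,c\,\phi_n)$ with $c=(\mu-m)/\lambda$ is a Weyl sequence for $H$ at $\mu=\pm\sqrt{\lambda^2+m^2}$ when $\lambda>0$, while ${}^t(\phi_n,0)$ and ${}^t(0,\phi_n)$ take care of $\mu=\pm m$; this route is more computational but fully self-contained. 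One further point deserving a line is that $H^2$ really is the stated diagonal operator with the correct operator domain $\mathcal D(T^2)\oplus\mathcal D(T^2)$, which is exactly where self-adjointness of $Q$ (hence of $H$) enters.
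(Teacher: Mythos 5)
Your proposal is correct, and its main route is genuinely different from the paper's. The paper gets the inclusion $\sigma(H)\subset(-\infty,-m]\cup[m,\infty)$ from the same identity $H^2=\mathrm{diag}(T^2+m^2,\,T^2+m^2)\ge m^2$, but for the reverse inclusion it does not invoke the spectral mapping theorem or any symmetry of $\sigma(H)$: given $\lambda_0$ with $|\lambda_0|\ge m$, it takes approximate eigenvectors $\psi_n$ of $T$ at $\nu_0=\sqrt{\lambda_0^2-m^2}\in\sigma(T)$ via the spectral measure and forms the explicit singular sequence $f_n={}^t(a\psi_n,\,b\psi_n)$, where $(a,b)$ is the normalized eigenvector of the matrix $\bigl(\begin{smallmatrix} m & \nu_0\\ \nu_0 & -m\end{smallmatrix}\bigr)$ for the eigenvalue $\lambda_0$; since that matrix has eigenvalues $\pm\sqrt{\nu_0^2+m^2}$, both signs $\pm|\lambda_0|$ are reached in one stroke, with no need for your anticommuting unitary $W$. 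Your ``alternative'' Weyl-sequence route is in fact essentially the paper's proof: your vector $(1,c)$ with $c=(\mu-m)/\lambda$ is proportional to their $(a,b)$, and your separate treatment of $\mu=\pm m$ via ${}^t(\phi_n,0)$, ${}^t(0,\phi_n)$ is subsumed there by allowing $\nu_0=0$. What your main route buys is brevity and conceptual clarity (it is close in spirit to the Foldy--Wouthuysen diagonalization the paper mentions as an aside), at the cost of leaning on the spectral mapping theorem for unbounded self-adjoint operators; if you go that way you should add the one-line remark that the image of the closed set $\sigma(H)$ (resp.\ $\sigma(T)$) under $\mu\mapsto\mu^2$ is closed because the map is proper, so no closure is lost in $\sigma(H^2)=\{\mu^2:\mu\in\sigma(H)\}$. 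What the paper's route buys is a self-contained argument by explicit singular sequences, which also makes the symmetry $\sigma(H)=-\sigma(H)$ a byproduct rather than an ingredient. Your final observation that $\sigma_d(H)=\emptyset$ because the set $(-\infty,-m]\cup[m,\infty)$ has no isolated points, and your domain remark identifying $\mathcal D(H^2)=\mathcal D(T^2)\oplus\mathcal D(T^2)$, both match the paper.
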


\begin{proof}
It follows from (\ref{eqn:SSDO-SA-1}) that
${\mathcal D}(H^2)=\mathcal D(T^2) \oplus \mathcal D(T^2)$
and that
\begin{equation}   \label{eqn:SSDO-SA-2}
H^2=
\begin{pmatrix}
 T^2 + m^2 I & 0 \\
0 & T^2 + m^2 I 
\end{pmatrix} 
\ge m^2 
\begin{pmatrix}
 I & 0 \\
0 &  I 
\end{pmatrix}. 
\end{equation}
This inequality implies that 
$\sigma(H) \subset (-\infty, \, -m] \cup [m, \, +\infty)$.

To complete the proof, 
we shall prove the fact that 
$\sigma(H) \supset (-\infty, \, -m] \cup [m, \, +\infty)$. To this end,
 suppose  $\lambda_0 \in (-\infty,  -m] \cup [m, \, +\infty)$ be given.
Since  $\sqrt{\lambda_0^2 - m^2} \ge 0$,
 we see, by the assumption of the theorem, that
$\sqrt{\lambda_0^2 - m^2} \in \sigma(T)$. 
Therefore, we can find 
a sequence $\{ \psi_n \}_{n=1}^{\infty} \subset \mathcal H$
such that
\begin{equation}     \label{eqn:SSDO-SA-3}
\Vert \psi_n \Vert_{\mathcal H}=1, \;\;
\psi_n \in \mbox{Ran}
\big( 
E_T (\nu_0 - \frac{1}{\,n \,}, \,
     \nu_0  + \frac{1}{\,n \,})
\big),
\quad \nu_0:=\sqrt{\lambda_0^2 - m^2}  
\end{equation}
for each $n$, where $E_T(\cdot)$ is the spectral measure associated 
with $T$:
\begin{equation}    \label{eqn:SSDO-SA-4}
T = \int_{-\infty}^{\infty} \lambda \, dE_T(\lambda).
\end{equation}
Here we have used a basic property of the 
spectral measure: see, for example, Reed and Simon 
\cite[p. 236, Proposition]{ReedSimon}.
It is straightforward to see that
\begin{align}   \label{eqn:SSDO-SA-5}
\Vert (T  - \nu_0) \psi_n \Vert_{\mathcal H} \to 0
\mbox{ as } n \to \infty.   
\end{align}

We shall construct a sequence 
$\{f_n \} \subset \mathcal D(H) =\mathcal D(T)
\oplus \mathcal D(T)$ satisfying 
$\Vert f_n \Vert_{\mathcal K}=1$ and 
$\Vert  (H-\lambda_0) f_n \Vert_{\mathcal K}\to 0$
as $n \to \infty$.
To this end,
we choose a pair of real numbers $a$ and $b$ so
that
\begin{equation}  \label{eqn:SSDO-SA-60}
a^2 + b^2 = 1
\end{equation}
and that
\begin{equation}   \label{eqn:SSDO-SA-61}
\begin{pmatrix}
 m &  \nu_0 \\
 \nu_0 & -m
\end{pmatrix} 
\begin{pmatrix}
 a\\
 b
\end{pmatrix} 
= 
\lambda_0
\begin{pmatrix}
 a\\
 b
\end{pmatrix}. 
\end{equation}
This is possible because the $2\times 2$ symmetric
matrix in (\ref{eqn:SSDO-SA-61}) has 
eigenvalues $\pm \lambda_0$. 
We now put
\begin{equation}      \label{eqn:SSDO-SA-7-1}
f_n :=
\begin{pmatrix}
 a \psi_n \\
 b\psi_n 
\end{pmatrix}\!.
\end{equation}
It is easy to see that  $\Vert f_n \Vert_{\mathcal K} =1$.
By using (\ref{eqn:SSDO-SA-60}) and (\ref{eqn:SSDO-SA-61}),
 we can show that
\begin{align*}   \label{eqn:SSDO-SA-9}
\Vert (H-\lambda_0)f_n \Vert_{\mathcal K}^2
= &
\Vert (m - \lambda_0)a \psi_n + bT \psi_n \Vert_{\mathcal H}^2  \\
&\qquad + \Vert a T\psi_n - (m + \lambda_0) b \psi_n\Vert_{\mathcal H}^2 \\ 
= &
\Vert b( -\nu_0  + T )\psi_n \Vert_{\mathcal H}^2  
 + \Vert a (T- \nu_0) \psi_n\Vert_{\mathcal H}^2 \\  
=&
\Vert (T  - \nu_0) \psi_n \Vert_{\mathcal H}^2 \to 0 
\mbox{ as }  n \to  \infty.
\end{align*}
We thus  have shown 
that $\lambda_0 \in \sigma(H)$.
\end{proof}

Here we briefly mention of
the abstract Fouldy-Wouthuysen transformation
$U_{{}_{FW}}$
in connection with Theorem \ref{thm:SSDO-main3}.
The transformation $U_{{}_{FW}}$
is a unitary  operator in $\mathcal K$,
and  transforms the supersymmetric Dirac operator
$H$ of the form  (\ref{eqn:SSDO-SA-1}) 
into the diagonal form:
\begin{equation*}
U_{{}_{FW}}H  \, U_{{}_{FW}}^*
=
\begin{pmatrix}
\sqrt{T^2 + m^2} & 0  \\
0 & -\sqrt{T^2 + m^2} 
\end{pmatrix}.
\end{equation*}
Note that it is possible
to prove (\ref{eqn:rev+1}) 
based on this unitary equivalence. 
For the abstract Fouldy-Wouthuysen transformation
for the supersymmetric Dirac operator of the
form (\ref{eqn:SSDO-K2-Sec1}),
we refer the reader to 
Thaller \cite[Chapter 5, \S 5.6]{Thaller}.

\vspace{2pt}
In all the later sections,
we shall apply the obtained results on the supersymmetric
Dirac operator to the magnetic Dirac operator $H_{\!A}$ 
of the form (\ref{eqn:1-1}) in the Hilbert space
${\mathcal K} = \big[L^2({\mathbb R}^3)\big]^4$, where we take
$T$ to be the Weyl-Dirac operator
\begin{equation} \label{eqn:2-WD-1}
T_{\!A}= \sigma \cdot (D - A(x) )  
\end{equation}
acting in the Hilbert space 
${\mathcal H}=\big[L^2({\mathbb R}^3)\big]^2$.

As was mentioned above (cf. (\ref{eqn:SSDO-K4}) and (\ref{eqn:SSDO-K4+1})), 
the investigations of properties of $\pm m$ modes of 
the magnetic Dirac operator $H_{\!A}$
are reduced to
the investigations of the corresponding
properties of zero modes of 
the Weyl-Dirac operator $T_{\!A}=\sigma \cdot (D - A(x))$.

We have to  emphasize the broad applicability
of the supersymmetric Dirac operator
in the context of the present paper.
Namely, thanks to the generality of
Theorems
\ref{thm:SSDO-main1} and  
\ref{thm:SSDO-main2}, 
we are able to utilize most of the existing works on
the zero modes of the Weyl-Dirac operator $T_{\!A}$
 (cf.
\cite{AdamMuratoriNash1,AdamMuratoriNash2,AdamMuratoriNash3,
BalinEvan1,BalinEvan2,BalinEvan3,
BalinEvanSaito,
BugliaroFeffermanGraf, Elton, 
ErdosSolovej1, ErdosSolovej2,  ErdosSolovej3,
LossYau})
for the purpose of  
investigating  $\pm m$ modes of 
the magnetic Dirac operator $H_{\!A}$.

\vspace{10pt}

\section{Asymptotic limits of $\pm m$ modes}  
\label{sec:AsymptoticLimits}

In this section,
we consider a class of  magnetic Dirac operators
$H_{\! A}$ under Assumption(SU) below,
and  will focus  on the asymptotic behaviors at infinity of 
$\pm m$ modes of $H_{\! A}$,
assuming that $\pm m$ are the eigenvalues
of $H_{\! A}$.
In section \ref{sec:Concludings},
 we shall see that there exists infinitely many
 $A$'s  such that the corresponding magnetic Dirac 
operators $H_{\! A}$ have the threshold  eigenvalues $\pm m$.

 We now  introduce the terminology
of $\pm m$ modes for the magnetic Dirac operator $H_{\! A}$.

\begin{defn}  \label{defn:Mmodes}
{\rm (Following Lieb \cite{Lieb}.)
By an $m$ mode (resp. a $-m$ mode), 
we mean an eigenfunction corresponding to 
the eigenvalue $m$ (resp. $-m$) of 
$H_{\!A}$, provided that  the threshold energy 
$m$ (resp. $-m$) is 
an eigenvalue of $H_{\!A}$.}
\end{defn}

\vspace{15pt}

\noindent
\textbf{Assumption(SU).} 

\noindent
Each element $A_j(x)$ 
($j=1, \, 2, \, 3$) of $A(x)$ is 
a measurable function satisfying
\begin{equation} \label{eqn:2-1}
| A_j(x) | \le C \langle x \rangle^{-\rho} 
\quad   ( \rho >1 ),
\end{equation}
where $C$ is a positive constant.
\vspace{15pt}

It is easy to see that under Assumption(SU)
the Dirac operator $H_{\! A}$
is a self-adjoint operator in the Hilbert space
 ${\mathcal K}=\big[L^2({\mathbb R}^3)\big]^4$ with  
$\mbox{Dom}(H_{\! A}) = [H^1({\mathbb R}^3)]^4$,
where $H^1({\mathbb R}^3)$ denotes the Sobolev space
of order 1.
Also it is easy to see that under Assumption(SU)
the Weyl-Dirac operator $T_{\! A}$
is a self-adjoint operator in the Hilbert space
 ${\mathcal H}=\big[L^2({\mathbb R}^3)\big]^2$ with  
$\mbox{Dom}(T_{\! A}) = \big[H^1({\mathbb R}^3)\big]^2$.
Since the operator $-\sigma \cdot A(x)$
is relatively compact with respect to the operator 
$T_0:=\sigma \cdot D$, and since $\sigma(T_0)=\mathbb R$,
it follows that  $\sigma(T_A)=\mathbb R$. 
Recalling that
\begin{equation}   \label{eqn:MDO}
H_{\! A} = 
\begin{pmatrix}
0  &  T_A  \\
T_A  &  0
\end{pmatrix}
 + 
m  \begin{pmatrix}
I  &  0  \\
0  &  -I
\end{pmatrix},
\end{equation}
 we can apply Theorem \ref{thm:SSDO-main3}
to  $H_{\! A}$, and get
$$
\sigma(H_{\!A}) = 
\sigma_{\rm ess}(H_{\!A}) = (-\infty, \, -m] \cup  [m, \,+\infty).
$$
Hence $\pm m$ are the threshold energies of 
the operator $H_{\! A}$. 
Assuming that $\pm m$ are the
eigenvalues of
$H_{\! A}$,
we find that
the eigenspaces corresponding to the eigenvalues
$\pm m$ of $H_{\! A}$ are given as 
the direct sum of $\mbox{Ker}(T_A)$ and 
the zero space $\{ 0 \}$ 
(cf. Corollary \ref{cor:SSDO-C1} in section \ref{sec:Supersymmetric}), 
and that these two eigenspaces themselves
as well as their dimensions
are independent of $m$.

\begin{thm} \label{thm:ABEF-main}
Suppose that Assumption{\rm(SU)} is verified, and 
that $m$ {\rm(}resp. $-m${\rm)} is an eigenvalue
 of $H_{\! A}$. 
Let $f$ be an $m$ mode 
{\rm(}resp. a $-m$ modes{\rm)} of $H_{\! A}$.
 Then there exists a zero
mode $\varphi^+$  {\rm(}resp. $\varphi^-${\rm)} of 
$T_A$ such that for any 
$\omega \in {\mathbb S}^2$
\begin{equation}    \label{eqn:limit-1}
\lim_{r \to \infty} r^2 f(r\omega) 
= 
\begin{pmatrix}
u^+(\omega)  \\
0
\end{pmatrix} 
\quad
\big( \mbox{resp.}
\begin{pmatrix}
0  \\
u^-(\omega) 
\end{pmatrix} 
\big),
\end{equation}
where 
\begin{equation}   \label{eqn:lim-1-WD}
u^{\pm}(\omega)  
=
\frac{i}{\, 4\pi \,}
\int_{{\mathbb R}^3}  
\big\{ 
  \big( \omega\cdot A(y) \big)  I_2
    + i\sigma \cdot 
    \big( \omega\times A(y) \big) 
   \big\}  \varphi^{\pm}(y) \, dy,
\end{equation}
and the convergence is uniform with respect to  $\omega$.
\end{thm}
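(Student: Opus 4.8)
The plan is to reduce everything to the Weyl--Dirac operator $T_{\!A}$ via Corollary~\ref{cor:SSDO-C1}: if $f={}^t(\varphi^+,\varphi^-)$ is an $m$ mode of $H_{\!A}$, then $\varphi^-=0$ and $\varphi^+\in\mbox{Ker}(T_A)$ (and symmetrically for a $-m$ mode), so it suffices to establish the asymptotic expansion $\varphi^+(r\omega)=r^{-2}u^+(\omega)+o(r^{-2})$ uniformly in $\omega$ for any zero mode $\varphi^+$ of $T_A=\sigma\cdot(D-A)$. To get at this, I would rewrite the zero-mode equation $\sigma\cdot D\,\varphi^+ = \sigma\cdot A\,\varphi^+$ and apply $\sigma\cdot D$ once more, using the identity $(\sigma\cdot D)^2=-\Delta\,I_2$, to obtain $-\Delta\varphi^+=(\sigma\cdot D)(\sigma\cdot A\,\varphi^+)=:g$. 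Under Assumption(SU) with $\rho>1$, together with the a priori knowledge that $\varphi^+\in L^2$ and in fact $\varphi^+\in H^1$ (so $\sigma\cdot A\,\varphi^+\in L^2$ with decay), one checks that $g$ is integrable and decays fast enough that $\varphi^+$ is recovered by convolution with the Newton kernel,
\begin{equation*}
\varphi^+(x)=\frac{1}{4\pi}\int_{{\mathbb R}^3}\frac{g(y)}{|x-y|}\,dy,
\end{equation*}
the additive harmonic part being zero because $\varphi^+\to0$ (this uses the standard fact that an $L^2$ harmonic function on ${\mathbb R}^3$ vanishes).

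Next I would feed $g=(\sigma\cdot D)(\sigma\cdot A\,\varphi^+)$ into the representation, integrate by parts to move the derivative onto the kernel, and expand $|x-y|^{-1}$ and $\nabla_x|x-y|^{-1}$ for large $|x|=r$: one has $|x-y|^{-1}=r^{-1}+O(r^{-2})$ and $\nabla_x|x-y|^{-1}=-\,\omega\,r^{-2}+O(r^{-3})$ with $\omega=x/|x|$, uniformly for $y$ in compact sets and controllably on the tails because of the $\langle y\rangle^{-\rho}$ decay of $A$. The leading $r^{-1}$ term must be checked to vanish: after the integration by parts the relevant integrand is $\nabla_y\big(|x-y|^{-1}\big)$ contracted against $\sigma\cdot A\,\varphi^+$, and the $r^{-1}$-order piece of $\nabla_y|x-y|^{-1}$ is of order $r^{-2}$, so indeed there is no $r^{-1}$ term — this is the analytic reflection of the fact that zero modes (as opposed to resonances) decay like $r^{-2}$. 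Collecting the $r^{-2}$ coefficient and using $\sigma_j\sigma_k=\delta_{jk}I_2+i\,\epsilon_{jkl}\sigma_l$ to rewrite $(\sigma\cdot\omega)(\sigma\cdot A)=(\omega\cdot A)I_2+i\,\sigma\cdot(\omega\times A)$ yields exactly formula~\eqref{eqn:lim-1-WD}.

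The main obstacle is making the limit genuinely uniform in $\omega\in{\mathbb S}^2$ and controlling the error terms with only $\rho>1$: the tail contributions $\int_{|y|>R}$ must be estimated using Cauchy--Schwarz together with $\varphi^+\in L^2$ and $|A(y)|\le C\langle y\rangle^{-\rho}$, and one needs $\varphi^+$ itself to have some pointwise decay (not merely $L^2$) to close the estimate near the ``antipodal'' region where $|x-y|$ can be small; here I would invoke elliptic regularity / a bootstrap from the Newton-potential representation to upgrade $\varphi^+$ to a continuous function tending to $0$ at infinity, and then iterate once to get the $r^{-1}$-type bound needed to justify dominated convergence in the tail integral. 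The remaining steps — interchanging limit and integral, and the purely algebraic Pauli-matrix manipulation — are routine. The symmetric case of a $-m$ mode is handled identically with $T_A^*=T_A$ and $\varphi^-$ in place of $\varphi^+$, giving the second alternative in~\eqref{eqn:limit-1}.
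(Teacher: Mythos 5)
Your reduction is exactly the paper's: Corollary \ref{cor:SSDO-C1} turns an $m$ mode (resp.\ $-m$ mode) of $H_{\!A}$ into ${}^t(\varphi^+,0)$ (resp.\ ${}^t(0,\varphi^-)$) with $\varphi^\pm\in\mbox{Ker}(T_A)$, so everything hinges on the asymptotics of zero modes of $T_A$. The paper disposes of that second step by quoting \cite[Theorem 1.2]{SaitoUmeda1} (plus \cite[Theorem 2.1]{SaitoUmeda2} for the continuity that makes $f(r\omega)$ pointwise meaningful), whereas you sketch a proof of it; your representation and the Pauli algebra $(\sigma\cdot\omega)(\sigma\cdot A)=(\omega\cdot A)I_2+i\sigma\cdot(\omega\times A)$ do reproduce \eqref{eqn:lim-1-WD} correctly. (One small point: since $A$ is only measurable, $g=(\sigma\cdot D)(\sigma\cdot A\,\varphi^+)$ is merely a distribution, so "$g$ is integrable" cannot be taken literally; the clean formulation of the step you describe is $\varphi^+=(\sigma\cdot D)(-\Delta)^{-1}(\sigma\cdot A\,\varphi^+)$, i.e.\ convolution with the Riesz-type kernel $\tfrac{i}{4\pi}\,\sigma\cdot(x-y)\,|x-y|^{-3}$, which is what the integration by parts amounts to.)

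The genuine gap is in your error control near the diagonal, which is the only hard part when $\rho$ is close to $1$. For $|x|=r$ the region $|x-y|\le r/2$ contributes
\begin{equation*}
\int_{|x-y|\le r/2}\frac{|A(y)|\,|\varphi^+(y)|}{|x-y|^2}\,dy
\;\lesssim\; r^{-\rho}\Bigl(\sup_{|y|\ge r/2}|\varphi^+(y)|\Bigr)\,r ,
\end{equation*}
so the $r^{-1}$-type bound you propose to stop at yields only $O(r^{-\rho})$, which is $o(r^{-2})$ only when $\rho>2$; Cauchy--Schwarz with $\varphi^+\in L^2$ fails outright because $|x-y|^{-4}$ is not locally integrable. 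What is actually needed is the sharp pointwise decay $|\varphi^+(y)|\le C\langle y\rangle^{-2}$, which makes this region $O(r^{-1-\rho})=o(r^{-2})$ for every $\rho>1$. Moreover, one pass of your bootstrap starting from boundedness gains only $\rho-1$ in the exponent, so for $1<\rho\le 2$ you must iterate on the order of $2/(\rho-1)$ times until the exponent saturates at $2$ -- "iterate once" does not suffice. This sharp $\langle x\rangle^{-2}$ decay of zero modes is precisely the content of \cite[Theorem 1.1]{SaitoUmeda1} (building on \cite{SaitoUmeda2}), i.e.\ exactly the ingredient the paper's citation supplies; with it in hand, the rest of your argument (kernel expansion, absence of an $r^{-1}$ term, dominated convergence uniform in $\omega$) closes as you describe.
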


Theorem \ref{thm:ABEF-main}  is a direct consequence
of Corollary \ref{cor:SSDO-C1}, together with 
Sait\={o} and Umeda \cite[Theorem 1.2]{SaitoUmeda1}.
Note that under Assumption(SU) every 
eigenfunction of $H_{\! A}$ corresponding to
either one of eigenvalues $\pm m$ is a continuous   
function of $x$ (cf. Sait\={o} and Umeda \cite[Theorem 2.1]{SaitoUmeda2}),
therefore the expression $f(r\omega)$ in (\ref{eqn:limit-1}) makes sense 
for each $\omega$.

\vspace{10pt}

\section{Sparseness of vector potentials yielding  $\pm m$ modes}   
\label{sec:Sparseness}

In this section, we shall discuss
 the sparseness of the set
of vector potentials $A$ which give rise to 
$\pm m$ modes of magnetic Dirac
operators $H_{\! A}$,
in the sprit of Balinsky and Evans \cite{BalinEvan1} and
\cite{BalinEvan2},
where they investigated Pauli operators
and Weyl-Dirac operators respectively.

We shall make the following assumption:

\vspace{8pt}

\noindent
\textbf{Assumption(BE).}

\noindent 
$A_j\in L^3({\mathbb R}^3)$  for $j=1$, $2$, $3$. 

\vspace{8pt}
Under Assumption(BE) Balinsky and Evans \cite[Lemma 2]{BalinEvan2}
showed that $-\sigma\cdot A$ is infinitesimally small with respect
to $T_0=\sigma\cdot D$ with 
$\mbox{Dom}(T_0)=\big[H^1({\mathbb R}^3)\big]^2$
(see (\ref{eqn:RangeSA-1}) below).
This fact enables us to define the self-adjoint 
realization
$T_A$ in the Hilbert space 
${\mathcal H}=\big[L^2({\mathbb R}^3)\big]^2$
as the operator sum of $T_0$
and $-\sigma\cdot A$, thus  
$\mbox{Dom}(T_A)=\big[H^1({\mathbb R}^3)\big]^2$.
It turns out that
under Assumption(BE), 
 $-\alpha\cdot A$ is infinitesimally small with respect
to $H_0:= \alpha \cdot D + m \beta$, 
and hence we can define the
self-adjoint  realization $H_{\! A}$
in the Hilbert space ${\mathcal K}=\big[L^2({\mathbb R}^3)\big]^4$ 
as the
operator sum of $H_0$ and $-\alpha\cdot A$, thus  
$\mbox{Dom}(H_{\! A})=\big[H^1({\mathbb R}^3)\big]^4$.
Therefore we can regard $H_{\! A}$ as
a supersymmetric Dirac operator, and 
shall apply the results in section 2 to $H_{\! A}$.
(Recall (\ref{eqn:MDO}).)

\vspace{5pt}

\begin{prop}    \label{prop:WD-spec}
Let Assumption\,{\rm(BE)} be satisfied. Then 
$\sigma(T_A)=\mathbb R$.
\end{prop}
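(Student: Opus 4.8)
The plan is to establish the two inclusions separately. Since under Assumption\,{\rm(BE)} the Weyl-Dirac operator $T_A$ is self-adjoint in $\mathcal H=\big[L^2(\mathbb R^3)\big]^2$ with ${\rm Dom}(T_A)=\big[H^1(\mathbb R^3)\big]^2$ (as recorded just before the statement), one has $\sigma(T_A)\subseteq\mathbb R$ for free; the whole content is the reverse inclusion $\sigma(T_A)\supseteq\mathbb R$. I would obtain it by producing, for an arbitrarily prescribed $\lambda\in\mathbb R$, a sequence $\{\psi_n\}\subset{\rm Dom}(T_A)$ with $\|\psi_n\|_{\mathcal H}=1$ and $\|(T_A-\lambda)\psi_n\|_{\mathcal H}\to 0$; the mere existence of such a Weyl sequence forces $\lambda\in\sigma(T_A)$, since otherwise $(T_A-\lambda)^{-1}$ would be bounded and $1=\|\psi_n\|_{\mathcal H}\le\|(T_A-\lambda)^{-1}\|\,\|(T_A-\lambda)\psi_n\|_{\mathcal H}\to 0$, a contradiction.

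For the construction, recall that the matrix $\sigma\cdot\xi$ has eigenvalues $\pm|\xi|$, so for the given $\lambda$ one may choose $\xi_0\in\mathbb R^3$ and a unit vector $v\in\mathbb C^2$ with $(\sigma\cdot\xi_0)v=\lambda v$ (this is precisely where $\sigma(T_0)=\mathbb R$ enters). Fix $\chi\in C_c^\infty(\mathbb R^3)$ with $\|\chi\|_{L^2}=1$, take $R_n\to\infty$, and put
\[
\psi_n(x):=R_n^{-3/2}\,e^{i\xi_0\cdot x}\,\chi(x/R_n)\,v .
\]
Then $\psi_n\in C_c^\infty(\mathbb R^3)\subset{\rm Dom}(T_A)$ and $\|\psi_n\|_{\mathcal H}=1$. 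Differentiating and using $(\sigma\cdot\xi_0)v=\lambda v$ gives
\[
(T_0-\lambda)\psi_n(x)=R_n^{-1}\,R_n^{-3/2}\,e^{i\xi_0\cdot x}\,(\sigma\cdot D\chi)(x/R_n)\,v ,
\]
so, after the change of variables $y=x/R_n$, $\|(T_0-\lambda)\psi_n\|_{\mathcal H}=R_n^{-1}\,\|(\sigma\cdot D\chi)\,v\|_{L^2}\to 0$.

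It remains to dispose of the potential term, and this is the only genuine difficulty. Using $(\sigma\cdot a)^2=|a|^2 I_2$ for real $a\in\mathbb R^3$, one has the pointwise identity $\big|(\sigma\cdot A(x))\psi_n(x)\big|=|A(x)|\,|\psi_n(x)|$, whence
\[
\|\sigma\cdot A\,\psi_n\|_{\mathcal H}^2=R_n^{-3}\int_{\mathbb R^3}|A(x)|^2\,|\chi(x/R_n)|^2\,dx\le R_n^{-3}\,\|A\|_{L^3}^2\,\big\|\chi(\cdot/R_n)\big\|_{L^6}^2=R_n^{-2}\,\|A\|_{L^3}^2\,\|\chi\|_{L^6}^2 ,
\]
where the inequality is Hölder with conjugate exponents $3/2$ and $3$ applied to $|A|^2$ and $|\chi(\cdot/R_n)|^2$, and the last equality is the scaling relation $\|\chi(\cdot/R_n)\|_{L^6}^2=R_n\,\|\chi\|_{L^6}^2$. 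The right-hand side tends to $0$ as $n\to\infty$. Combining the two estimates, $\|(T_A-\lambda)\psi_n\|_{\mathcal H}\le\|(T_0-\lambda)\psi_n\|_{\mathcal H}+\|\sigma\cdot A\,\psi_n\|_{\mathcal H}\to 0$, hence $\lambda\in\sigma(T_A)$; as $\lambda\in\mathbb R$ was arbitrary, $\sigma(T_A)=\mathbb R$.

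The key point — and the reason Assumption\,{\rm(BE)} is the natural hypothesis here rather than relative compactness of $-\sigma\cdot A$ with respect to $T_0$ — is exactly the potential estimate above: $L^3(\mathbb R^3)$ is scaling-critical for $T_A$ in three dimensions, so a wave packet of width $R_n\to\infty$ becomes asymptotically transparent to $A$. It is also worth noting why the packet must be \emph{spread} rather than merely \emph{translated}: translating a fixed-scale packet off to infinity would annihilate $\|\sigma\cdot A\,\psi_n\|_{\mathcal H}$ only if $A$ had decaying tails (which it need not under Assumption\,{\rm(BE)}), and in any case would leave the derivative error of order one. Everything beyond the Hölder estimate is a routine change of variables.
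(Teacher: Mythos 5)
Your proof is correct, and it takes a genuinely different route from the paper. The paper never constructs Weyl sequences for $T_A$ at all: it reduces the claim to the identity $\sigma_{\mathrm{ess}}(T_A)=\sigma_{\mathrm{ess}}(T_0)=\mathbb R$, which it obtains by showing that the resolvent difference $(T_A+i)^{-1}-(T_0+i)^{-1}$ is compact. That compactness is proved through the factorization
$(T_A+i)^{-1}(\sigma\cdot A)(T_0+i)^{-1}
=\widetilde R_A(-i)\,\{\langle D\rangle^{-1}(\sigma\cdot A)\langle D\rangle^{-1/2}\}\,\{\langle D\rangle^{1/2}(T_0+i)^{-1}\}$,
which requires the auxiliary Lemmas \ref{lem:WD-T0}--\ref{lem:WD-T1} (boundedness of $\langle D\rangle^{1/2}(T_0-z)^{-1}$, the extension $\widetilde R_A(z)$ of $(T_A-z)^{-1}\langle D\rangle$, and the compactness of $\langle D\rangle^{-1}(\sigma\cdot A)\langle D\rangle^{-1/2}$ imported from Balinsky--Evans), and then the stability of the essential spectrum under relatively compact resolvent perturbations from Reed--Simon. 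Your argument instead exhibits explicit dilated wave packets and kills the potential term by the scaling-criticality of $L^3(\mathbb R^3)$ via H\"older; it is more elementary and self-contained (no compactness lemmas, no essential-spectrum theorem), and it makes transparent why $L^3$ is exactly the right class. What the paper's route buys in exchange is the stronger structural fact that $-\sigma\cdot A$ perturbs $T_0$ relatively compactly at the resolvent level, hence $\sigma_{\mathrm{ess}}(T_A)=\sigma_{\mathrm{ess}}(T_0)$, and a proof scheme that the authors reuse verbatim in section \ref{sec:Structure} under Assumption(E); your construction would also adapt to that setting, but you would have to redo the potential estimate there since $A\in L^3$ is not assumed. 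One small presentational point: your equality $|(\sigma\cdot A(x))\psi_n(x)|=|A(x)|\,|\psi_n(x)|$ is indeed exact because $(\sigma\cdot a)^*(\sigma\cdot a)=|a|^2I_2$ for real $a$, though the inequality $\le$ is all you need.
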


We shall prepare a few lemmas for the proof of 
Proposition \ref{prop:WD-spec}.

\begin{lem}   \label{lem:WD-T0}
Let $z \in {\mathbb C} \setminus \mathbb R$.
Then
$\langle D \rangle^{1/2} (T_0 - z )^{-1}$ is a bounded
operator in ${\mathcal H}$. 
Moreover we have
\begin{equation}    \label{eqn:RangeT0}
\mbox{\rm Ran}  \big( \langle D \rangle^{1/2}
 (T_0 - z)^{-1} \big) 
\subset
\big[ H^{1/2}({\mathbb R}^3) \big]^2.
\end{equation}
\end{lem}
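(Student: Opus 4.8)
The plan is to reduce everything to a Fourier-multiplier estimate. Recall that $T_0 = \sigma\cdot D$ is self-adjoint on $\big[H^1(\mathbb R^3)\big]^2$ and that, by the algebraic identity $(\sigma\cdot\xi)^2 = |\xi|^2 I_2$, the operator $T_0$ satisfies $T_0^2 = -\Delta \otimes I_2 = |D|^2$. Hence, by the spectral theorem (or directly on the Fourier side), $(T_0 - z)^{-1}$ is the Fourier multiplier with symbol $(\sigma\cdot\xi - z)^{-1} = (\sigma\cdot\xi + z)(|\xi|^2 - z^2)^{-1}$. Composing with $\langle D\rangle^{1/2}$, the operator $\langle D\rangle^{1/2}(T_0-z)^{-1}$ is the Fourier multiplier with symbol
\begin{equation*}
M_z(\xi) = \langle\xi\rangle^{1/2}\,\frac{\sigma\cdot\xi + z}{|\xi|^2 - z^2}.
\end{equation*}
So the first step is to show $M_z(\xi)$ is a bounded matrix-valued function of $\xi$, whence $\langle D\rangle^{1/2}(T_0-z)^{-1}$ is bounded on $\mathcal H$ by Plancherel.

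For the boundedness of $M_z$: since $z\notin\mathbb R$, the denominator $|\xi|^2 - z^2$ never vanishes and in fact $|\,|\xi|^2 - z^2\,| \ge c_z(1+|\xi|^2)$ for some constant $c_z>0$ depending only on $z$ (split into $|\xi|$ small, where $|\xi|^2-z^2$ is bounded away from $0$, and $|\xi|$ large, where $|\,|\xi|^2-z^2\,|\sim|\xi|^2$). The numerator satisfies $\|\sigma\cdot\xi + z\| \le |\xi| + |z| \le C_z\langle\xi\rangle$ as an operator norm on $\mathbb C^2$. Therefore
\begin{equation*}
\|M_z(\xi)\| \le \langle\xi\rangle^{1/2}\cdot\frac{C_z\langle\xi\rangle}{c_z\langle\xi\rangle^2} = \frac{C_z}{c_z}\,\langle\xi\rangle^{-1/2} \le \frac{C_z}{c_z},
\end{equation*}
which gives the desired uniform bound.

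For the range inclusion \eqref{eqn:RangeT0}: given $g\in\mathcal H$, set $h = \langle D\rangle^{1/2}(T_0-z)^{-1}g$, so $\widehat h(\xi) = M_z(\xi)\widehat g(\xi)$. To show $h\in\big[H^{1/2}(\mathbb R^3)\big]^2$ we must check $\langle\xi\rangle^{1/2}\widehat h(\xi)\in\big[L^2\big]^2$, i.e. that $\langle\xi\rangle^{1/2}M_z(\xi)$ is a bounded multiplier. But from the estimate above, $\langle\xi\rangle^{1/2}\|M_z(\xi)\| \le (C_z/c_z)\langle\xi\rangle^{1/2}\langle\xi\rangle^{-1/2} = C_z/c_z$, so indeed $\langle\xi\rangle^{1/2}M_z(\xi)$ is bounded and $h\in\big[H^{1/2}(\mathbb R^3)\big]^2$. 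This completes the proof.

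The only mildly delicate point is the two-regime lower bound $|\,|\xi|^2 - z^2\,| \ge c_z\langle\xi\rangle^2$; everything else is a routine symbol computation. In particular there is no real obstacle here — the lemma is essentially bookkeeping on the Fourier side, and the content is simply that inverting $T_0-z$ gains one derivative while $\langle D\rangle^{1/2}$ only costs half a derivative, leaving a net gain of $1/2$.
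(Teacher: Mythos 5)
Your proof is correct, and it lives on the Fourier side just as the paper's does, but by a slightly different route. The paper works only at $z=-i$, where the anticommutation relations give the exact identity $\Vert (T_0+i)\varphi\Vert_{\mathcal H}=\Vert \langle D\rangle \varphi\Vert_{\mathcal H}$, hence $\Vert \langle D\rangle^{1/2}(T_0+i)^{-1}\varphi\Vert_{[H^{1/2}({\mathbb R}^3)]^2}=\Vert\varphi\Vert_{\mathcal H}$, and it asserts without detail that it suffices to treat $z=-i$ (the reduction would go through the first resolvent identity). You instead invert the symbol explicitly, writing $(T_0-z)^{-1}$ as the matrix multiplier $(\sigma\cdot\xi+z)(|\xi|^2-z^2)^{-1}$, and bound $\langle\xi\rangle^{1/2}M_z(\xi)$ uniformly; this treats every nonreal $z$ at once and dispenses with the unproved reduction step, at the modest cost of the two-regime lower bound $\bigl|\,|\xi|^2-z^2\,\bigr|\ge c_z\langle\xi\rangle^{2}$, which is indeed valid because $z\notin{\mathbb R}$ forces $z^2\notin[0,\infty)$. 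The paper's version buys a clean exact identity (with constant $1$) at a single point of the resolvent set; yours buys uniformity in $z$ and makes both the boundedness and the range inclusion (\ref{eqn:RangeT0}) fall out of the same one-line symbol estimate. Both arguments ultimately rest on the same algebraic fact $(\sigma\cdot\xi)^2=|\xi|^2 I_2$, so the difference is one of packaging rather than substance.
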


\begin{proof}
It is sufficient to show the conclusions of the lemma
for $z=-i$. Let $\varphi \in \mbox{Dom}(T_0)$. 
Then we have
\begin{gather}  \label{eqn:RangeT0-1}
\begin{split}
\Vert (T_0 + i) \varphi \Vert^2_{{\mathcal H}}  
&= 
\int_{{\mathbb R}^3} 
\big| 
\big( (\sigma \cdot \xi) + i I_2 \big)
   \widehat \varphi (\xi)
\big|_{{\mathbb C}^2}^2 \, d\xi   \\
&= 
\int_{{\mathbb R}^3} 
( |\xi|^2 + 1 )
 \big|
\widehat \varphi (\xi)
\big|_{{\mathbb C}^2}^2 \, d\xi    \\
&= 
\Vert \langle D \rangle \varphi \Vert^2_{{\mathcal H}},
\end{split}
\end{gather}
where we have used the anti-commutation relation
$\sigma_j \sigma_k + \sigma_k \sigma_j
 = 2 \delta_{jk}I_2$ in the second equality.
It follows from (\ref{eqn:RangeT0-1})
that
\begin{equation}   \label{eqn:RangeT0-2}
\Vert \varphi \Vert_{{\mathcal H}}  
=\Vert \langle D \rangle (T_0 + i)^{-1}
 \varphi \Vert_{{\mathcal H}} 
\end{equation}
for all $\varphi \in {\mathcal H}$. 
Furthermore, we see that
\begin{gather}  \label{eqn:RangeT0-3}
\begin{split}
\Vert \langle D \rangle^{1/2}
 (T_0 + i)^{-1} \varphi
\Vert_{{\mathcal H}}   
&\le 
 \Vert 
\langle D \rangle^{1/2}  
(T_0 + i)^{-1} \varphi 
     \Vert_{[H^{1/2}({\mathbb R^3})]^2}    \\
&=
\Vert \langle D \rangle  
(T_0 + i)^{-1} \varphi \Vert_{{\mathcal H}}    \\
&= 
\Vert \varphi \Vert_{{\mathcal H}}.
\end{split}
\end{gather}
It is evident that (\ref{eqn:RangeT0-3}) proves
the conclusions of the lemma for $z=-i$.
\end{proof}

\begin{lem}   \label{lem:WD-sigma}
If $\varphi \in \big[ H^{1/2}({\mathbb R}^3) \big]^2$,
then
$(\sigma\cdot A)\langle D \rangle^{-1/2} \varphi \in {\mathcal H}$.
\end{lem}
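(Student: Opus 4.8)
The plan is to reduce the claim to the Sobolev embedding $H^{1}(\mathbb{R}^3)\hookrightarrow L^{6}(\mathbb{R}^3)$ combined with the generalized H\"older inequality, the point being that the exponent relation $\tfrac12=\tfrac13+\tfrac16$ is precisely matched to Assumption(BE).

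First I would set $\psi:=\langle D\rangle^{-1/2}\varphi$. Since $\varphi\in\big[H^{1/2}(\mathbb{R}^3)\big]^2$, we have $\langle D\rangle\psi=\langle D\rangle^{1/2}\varphi\in\mathcal{H}$, hence $\psi\in\big[H^{1}(\mathbb{R}^3)\big]^2$. Applying the Sobolev embedding $H^{1}(\mathbb{R}^3)\hookrightarrow L^{6}(\mathbb{R}^3)$ to each component then gives $\psi\in\big[L^{6}(\mathbb{R}^3)\big]^2$ together with a bound
\[
\Vert\psi\Vert_{[L^{6}(\mathbb{R}^3)]^2}\le C_{S}\,\Vert\psi\Vert_{[H^{1}(\mathbb{R}^3)]^2}=C_{S}\,\Vert\langle D\rangle^{1/2}\varphi\Vert_{\mathcal{H}}.
\]

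Next I would record the pointwise estimate $|(\sigma\cdot A)(x)\psi(x)|_{\mathbb{C}^2}\le C\,|A(x)|\,|\psi(x)|_{\mathbb{C}^2}$ for a.e.\ $x$, which follows from $\sigma\cdot A=\sum_{j=1}^{3}A_{j}\sigma_{j}$ and the fact that each $\sigma_{j}$ has operator norm $1$. The generalized H\"older inequality with $\tfrac12=\tfrac13+\tfrac16$ then yields
\[
\Vert(\sigma\cdot A)\psi\Vert_{\mathcal{H}}\le C\,\Vert\,|A|\,\Vert_{L^{3}(\mathbb{R}^3)}\,\Vert\psi\Vert_{[L^{6}(\mathbb{R}^3)]^2}\le C'\Big(\sum_{j=1}^{3}\Vert A_{j}\Vert_{L^{3}(\mathbb{R}^3)}\Big)\Vert\langle D\rangle^{1/2}\varphi\Vert_{\mathcal{H}},
\]
which is finite by Assumption(BE). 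Therefore $(\sigma\cdot A)\langle D\rangle^{-1/2}\varphi\in\mathcal{H}$, as claimed (and in fact the estimate shows this map is bounded from $\big[H^{1/2}(\mathbb{R}^3)\big]^2$ to $\mathcal{H}$).

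I do not expect a genuine obstacle here: the only slightly delicate point is the bookkeeping of the $2\times2$ matrix/vector structure in the pointwise bound and the componentwise application of the Sobolev inequality; once those are set up, the conclusion is an immediate consequence of the two standard inequalities above.
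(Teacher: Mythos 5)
Your argument is correct, but it follows a different route from the paper. You prove the required bound from scratch: setting $\psi=\langle D\rangle^{-1/2}\varphi\in[H^{1}({\mathbb R}^3)]^2$, you use the Sobolev embedding $H^{1}({\mathbb R}^3)\hookrightarrow L^{6}({\mathbb R}^3)$ and the H\"older inequality with $\tfrac12=\tfrac13+\tfrac16$ together with the pointwise bound $|(\sigma\cdot A)(x)\psi(x)|_{{\mathbb C}^2}\le C|A(x)|\,|\psi(x)|_{{\mathbb C}^2}$, which is exactly where Assumption(BE) enters; all of these steps are sound, and the ${\mathbb C}^2$ bookkeeping is handled correctly (in fact $\sigma\cdot A(x)$ is Hermitian with eigenvalues $\pm|A(x)|$, so its operator norm is $|A(x)|$). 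The paper instead does not reprove any Sobolev/H\"older estimate: it invokes the already-quoted Balinsky--Evans relative bound
$\Vert(\sigma\cdot A)\varphi\Vert_{\mathcal H}\le\epsilon\Vert T_0\varphi\Vert_{\mathcal H}+k_\epsilon\Vert\varphi\Vert_{\mathcal H}$
(inequality (\ref{eqn:RangeSA-1})), applies it to $\langle D\rangle^{-1/2}\varphi\in\mbox{Dom}(T_0)$, and uses the identity $\Vert(T_0+i)\psi\Vert_{\mathcal H}=\Vert\langle D\rangle\psi\Vert_{\mathcal H}$ from (\ref{eqn:RangeT0-1}) to conclude
$\Vert(\sigma\cdot A)\langle D\rangle^{-1/2}\varphi\Vert_{\mathcal H}\le\epsilon\Vert\langle D\rangle^{1/2}\varphi\Vert_{\mathcal H}+k_\epsilon\Vert\varphi\Vert_{\mathcal H}<\infty$.
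What your approach buys is a self-contained, elementary proof that moreover exhibits $(\sigma\cdot A)\langle D\rangle^{-1/2}$ as a bounded map from $[H^{1/2}({\mathbb R}^3)]^2$ to ${\mathcal H}$ with norm controlled by $\Vert A\Vert_{L^3}$; what the paper's route buys is economy, since the Balinsky--Evans lemma is needed anyway in section 4 to define $T_A$ as an operator sum, and its $\epsilon$-smallness (which your direct estimate does not by itself give, though it would follow by approximating $A$ in $L^3$) is the structurally stronger input used there.
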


\begin{proof}
By Balinsky and Evans \cite[Lemma 2]{BalinEvan2}, we see
that for any $\epsilon >0$, there exists
a constant $k_{\epsilon}>0$ such that
for all $\varphi \in \mbox{Dom}(T_0)$ 
\begin{equation}    \label{eqn:RangeSA-1}
\Vert (\sigma \cdot A) \varphi \Vert_{\mathcal H}
\le 
\epsilon  \Vert T_0 \varphi \Vert_{\mathcal H}
 + 
k_{\epsilon }
\Vert \varphi \Vert_{\mathcal H}.
\end{equation}
By virtue of the fact that 
$\langle D \rangle^{-1/2}\varphi \in \mbox{Dom}(T_0)$
for $\varphi \in \big[ H^{1/2}({\mathbb R}^3) \big]^2$,
it follows from (\ref{eqn:RangeSA-1})
that
\begin{align*}    
\Vert (\sigma \cdot A) 
\langle D \rangle^{-1/2} \varphi
\Vert_{\mathcal H}
&\le 
\epsilon  \Vert (T_0 + i) \langle D \rangle^{-1/2}
\varphi \Vert_{\mathcal H}
 + 
k_{\epsilon }
\Vert \langle D \rangle^{-1/2} \varphi 
\Vert_{\mathcal H}                \nonumber\\ 
&\le
\epsilon  \Vert \langle D \rangle^{1/2}
\varphi \Vert_{\mathcal H}
 + 
k_{\epsilon }
\Vert  \varphi \Vert_{\mathcal H} 
                 < + \infty, 
\end{align*}
where we have used (\ref{eqn:RangeT0-1}) 
 and the fact that
$
\Vert \langle D \rangle^{1/2} \varphi \Vert_{\mathcal H}
=
\Vert  \varphi \Vert_{[ H^{1/2}({\mathbb R}^3) ]^2}.
$
\end{proof}

\begin{lem}   \label{lem:WD-sigma1}
$\langle D \rangle^{-1} (\sigma\cdot A)\langle D \rangle^{-1/2}$ 
is a compact operator in ${\mathcal H}$.
\end{lem}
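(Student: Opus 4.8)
Proof proposal for Lemma 3.4 ($\langle D\rangle^{-1}(\sigma\cdot A)\langle D\rangle^{-1/2}$ is compact in $\mathcal H$).

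The plan is to write the operator as a norm limit of compact operators by truncating the potential. First I would decompose $A_j = A_j^{(R)} + \widetilde A_j^{(R)}$, where $A_j^{(R)} := A_j \, \chi_{\{|A_j|\le R,\, |x|\le R\}}$ is the bounded, compactly supported truncation and $\widetilde A_j^{(R)} := A_j - A_j^{(R)}$. Since $A_j\in L^3(\mathbb R^3)$, dominated convergence gives $\Vert \widetilde A_j^{(R)}\Vert_{L^3}\to 0$ as $R\to\infty$. The bound of Balinsky and Evans (\ref{eqn:RangeSA-1}) — indeed its proof — shows that the relatively bounded constant controlling $\sigma\cdot A$ by $T_0$ can be taken proportional to $\Vert A\Vert_{L^3}$; applying this to $\widetilde A^{(R)}$ and running the estimate of Lemma \ref{lem:WD-sigma} verbatim yields
$$
\Vert \langle D\rangle^{-1}(\sigma\cdot\widetilde A^{(R)})\langle D\rangle^{-1/2}\Vert
\le
\Vert (\sigma\cdot\widetilde A^{(R)})\langle D\rangle^{-1/2}\Vert
\le C\,\Vert \widetilde A^{(R)}\Vert_{L^3}\longrightarrow 0.
$$
So it suffices to prove that $\langle D\rangle^{-1}(\sigma\cdot A^{(R)})\langle D\rangle^{-1/2}$ is compact for each fixed $R$, and then pass to the limit.

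For fixed $R$ the multiplication operator $\sigma\cdot A^{(R)}$ is bounded with support in the ball $B_R$, so I would factor through a cutoff: with $\psi\in C_c^\infty(\mathbb R^3)$ equal to $1$ on $B_R$, write
$$
\langle D\rangle^{-1}(\sigma\cdot A^{(R)})\langle D\rangle^{-1/2}
=
\big(\langle D\rangle^{-1}\psi\big)\,\big(\sigma\cdot A^{(R)}\big)\,\langle D\rangle^{-1/2}.
$$
The operator $\sigma\cdot A^{(R)}$ is bounded and $\langle D\rangle^{-1/2}$ is bounded, so the claim reduces to compactness of $\langle D\rangle^{-1}\psi$, i.e. of $\psi(x)\,g(D)$-type operators with $g(\xi)=\langle\xi\rangle^{-1}\in L^2(\mathbb R^3)$ (since we are in three dimensions) tending to $0$ at infinity. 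This is the classical fact that an operator of the form $f(x)g(D)$ with $f,g\in L^2_{\mathrm{loc}}$ vanishing at infinity (here $f=\psi\in C_c^\infty$, $g=\langle\cdot\rangle^{-1}\in L^2$) is Hilbert–Schmidt, hence compact; see e.g. Reed–Simon. Taking adjoints if one prefers to have the $L^2$ factor on the left is harmless.

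Finally, being a norm limit of the compact operators $\langle D\rangle^{-1}(\sigma\cdot A^{(R)})\langle D\rangle^{-1/2}$, the operator $\langle D\rangle^{-1}(\sigma\cdot A)\langle D\rangle^{-1/2}$ is itself compact, which is the assertion of the lemma. The one point requiring care — the main obstacle — is the first step: extracting from the Balinsky–Evans estimate (\ref{eqn:RangeSA-1}) that the $\epsilon$ there can be taken $O(\Vert A\Vert_{L^3})$, so that truncation actually produces a small-norm remainder rather than merely a bounded one. This is implicit in their proof (which goes through the Sobolev/Kato–Seiler–Simon inequality $\Vert |A|^{1/2}\langle D\rangle^{-1/2}\Vert\lesssim \Vert A\Vert_{L^3}^{1/2}$ in $d=3$), and I would either invoke that proof or reproduce the one-line Kato–Seiler–Simon bound directly, which also makes the Hilbert–Schmidt step above transparent.
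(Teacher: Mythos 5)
Your overall strategy (truncate $A$ in $L^3$, show the truncated piece is compact, and control the tail in operator norm) is viable, but the quantitative step as you wrote it fails. In the displayed chain you bound $\Vert(\sigma\cdot\widetilde A^{(R)})\langle D\rangle^{-1/2}\Vert$ by $C\Vert\widetilde A^{(R)}\Vert_{L^3}$, but that operator need not even be bounded on $\mathcal H$ for an $L^3$ potential. The Balinsky--Evans estimate (\ref{eqn:RangeSA-1}) is smallness relative to a \emph{full} derivative (equivalently $\Vert(\sigma\cdot A)u\Vert_{\mathcal H}\le C\Vert A\Vert_{L^3}\Vert\,|D|u\Vert_{\mathcal H}$, by H\"older with $L^3\times L^6$ and $H^1\hookrightarrow L^6$), so running the computation of Lemma \ref{lem:WD-sigma} verbatim only gives $\Vert(\sigma\cdot\widetilde A)\langle D\rangle^{-1/2}\varphi\Vert_{\mathcal H}\le C\Vert\widetilde A\Vert_{L^3}\Vert\varphi\Vert_{H^{1/2}}$, i.e.\ smallness of $(\sigma\cdot\widetilde A)\langle D\rangle^{-1}$ on $\mathcal H$, not of $(\sigma\cdot\widetilde A)\langle D\rangle^{-1/2}$. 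Indeed for $A(x)=|x|^{-1+\epsilon}\chi_{\{|x|\le1\}}\in L^3$ with $\epsilon$ small, $A\langle D\rangle^{-1/2}$ is unbounded (test it on $H^{1/2}$ functions behaving like $|x|^{-1+\delta}$ near the origin), and this singular part lies in the tail $\widetilde A^{(R)}$ for every $R$; so you are not allowed to discard the left factor $\langle D\rangle^{-1}$ in the first inequality. The estimate you actually need, $\Vert\langle D\rangle^{-1}(\sigma\cdot\widetilde A^{(R)})\langle D\rangle^{-1/2}\Vert\le C\Vert\widetilde A^{(R)}\Vert_{L^3}$, is true, but it requires splitting the potential symmetrically, $\sigma\cdot\widetilde A=|\widetilde A|^{1/2}U_{\widetilde A}|\widetilde A|^{1/2}$ with $\Vert U_{\widetilde A}(x)\Vert\le1$, and using $\Vert\,|\widetilde A|^{1/2}\langle D\rangle^{-1/2}\Vert\le C\Vert\widetilde A\Vert_{L^3}^{1/2}$ on \emph{both} sides; note this last bound comes from the Sobolev embedding $H^{1/2}(\mathbb R^3)\hookrightarrow L^3(\mathbb R^3)$, not from Kato--Seiler--Simon (which fails here since $\langle\xi\rangle^{-1/2}\notin L^6(\mathbb R^3)$). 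This is precisely the remark you defer to your last sentence; without it the truncation argument does not close.

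A second, smaller error: $\langle D\rangle^{-1}\psi$ is not Hilbert--Schmidt, because $\langle\xi\rangle^{-1}\notin L^2(\mathbb R^3)$ (one needs $\langle\xi\rangle^{-s}$ with $s>3/2$ in dimension three). It is compact, but the correct justification is either the criterion that $f(x)g(D)$ is compact when $f,g\in L^\infty$ vanish at infinity, or Rellich--Kondrachov applied to $\psi\langle D\rangle^{-1}$ (which maps $\mathcal H$ boundedly into $H^1$ functions supported in a fixed ball), followed by taking adjoints. With these two repairs your argument becomes correct, and it is genuinely different from the paper's proof: the paper simply factorizes $\langle D\rangle^{-1}(\sigma\cdot A)\langle D\rangle^{-1/2}$ as $\bigl(|D|^{1/2}\langle D\rangle^{-1}\bigr)\bigl(|D|^{-1/2}(\sigma\cdot A)|D|^{-1/2}\bigr)\bigl(|D|^{1/2}\langle D\rangle^{-1/2}\bigr)$ and quotes the compactness of the middle factor from Balinsky and Evans \cite[Lemma 1]{BalinEvan2}, so in effect your route re-proves that cited lemma (by truncation and Sobolev-type bounds) instead of invoking it.
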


\begin{proof}
One can make a factorization
\begin{gather}   
\begin{split}   \label{eqn:factorization} 
{}&\langle D \rangle^{-1} (\sigma\cdot A)
                   \langle D \rangle^{-1/2}   \\
=&
\Big(
\frac{|D|^{1/2}}{\langle D \rangle} 
\Big)
\Big(
\frac{1}{|D|^{1/2}} (\sigma \cdot A ) \frac{1}{|D|^{1/2}} 
\Big)
\Big(
\frac{|D|^{1/2}}{\langle D \rangle^{1/2}} 
\Big).
\end{split}
\end{gather}
It is obvious that the first term and the last term
on the right hand side of  (\ref{eqn:factorization})
are bounded operators in ${\mathcal H}$.
Then it follows from (\ref{eqn:factorization}) and
Balinsky and Evans \cite[Lemma 1]{BalinEvan2}
that the conclusion of the lemma holds true.
\end{proof}

\begin{lem}   \label{lem:WD-T1}
Let $z \in {\mathbb C} \setminus \mathbb R$.
Then
$(T_A - z)^{-1}\langle D \rangle
\Big|_{ [ H^1({\mathbb R}^3) ]^2}$ 
can be
extended to a bounded operator $\widetilde R_A(z)$ in  ${\mathcal H}$.
Moreover
\begin{equation}    \label{eqn:R-T0}
(T_A - z )^{-1} \varphi = \widetilde R_A(z) 
\langle D \rangle^{-1}
\varphi
\quad {\mbox for \ } \forall \varphi \in {\mathcal H}.
\end{equation}
\end{lem}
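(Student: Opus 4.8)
*Let $z \in {\mathbb C} \setminus \mathbb R$. Then $(T_A - z)^{-1}\langle D \rangle\big|_{[H^1({\mathbb R}^3)]^2}$ can be extended to a bounded operator $\widetilde R_A(z)$ in ${\mathcal H}$. Moreover $(T_A - z)^{-1}\varphi = \widetilde R_A(z)\langle D\rangle^{-1}\varphi$ for all $\varphi\in{\mathcal H}$.*

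\emph{Proof proposal.} The plan is to obtain $\widetilde R_A(z)$ as the Hilbert-space adjoint of an operator that is manifestly bounded, rather than expanding $(T_A-z)^{-1}$ in a resolvent series; the latter is awkward here because under Assumption(BE) the perturbation $-\sigma\cdot A$ is only \emph{infinitesimally}, not relatively compactly, bounded with respect to $T_0$. Since $T_A$ is self-adjoint with $\mbox{Dom}(T_A)=[H^1(\mathbb{R}^3)]^2$, the heart of the matter is that $(T_A-z)^{-1}$ maps $\mathcal H$ \emph{boundedly} onto $[H^1(\mathbb{R}^3)]^2$; dualizing this mapping property yields exactly the boundedness of $(T_A-z)^{-1}\langle D\rangle$.

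First I would record that, by the infinitesimal bound $\|(\sigma\cdot A)\varphi\|_{\mathcal H}\le\epsilon\|T_0\varphi\|_{\mathcal H}+k_\epsilon\|\varphi\|_{\mathcal H}$ of Balinsky and Evans (cf. (\ref{eqn:RangeSA-1})) with $\epsilon<1$, the graph norm of $T_A$ is equivalent to that of $T_0$, hence, by (\ref{eqn:RangeT0-1}), to $\varphi\mapsto\|\langle D\rangle\varphi\|_{\mathcal H}$. Consequently, for $z\in\mathbb{C}\setminus\mathbb{R}$ the identity $T_A(T_A-z)^{-1}=I+z(T_A-z)^{-1}$ together with this norm equivalence shows that $\langle D\rangle(T_A-z)^{-1}$ is everywhere defined and bounded in $\mathcal H$, for every $z\in\mathbb{C}\setminus\mathbb{R}$ (its range lies in $\mbox{Dom}(T_A)=\mbox{Dom}(\langle D\rangle)$, so the composition makes sense).

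The second step is the duality. Fix $z\in\mathbb{C}\setminus\mathbb{R}$ and set $B:=\langle D\rangle(T_A-\bar z)^{-1}$, bounded in $\mathcal H$ by the previous paragraph; hence $\widetilde R_A(z):=B^{*}$ is bounded in $\mathcal H$. For $g\in[H^1(\mathbb{R}^3)]^2=\mbox{Dom}(\langle D\rangle)$ and arbitrary $f\in\mathcal H$, self-adjointness of $\langle D\rangle$ and of $T_A$ give $(Bf,\,g)_{\mathcal H}=\big((T_A-\bar z)^{-1}f,\,\langle D\rangle g\big)_{\mathcal H}=\big(f,\,(T_A-z)^{-1}\langle D\rangle g\big)_{\mathcal H}$, so by the Riesz representation $\widetilde R_A(z)g=(T_A-z)^{-1}\langle D\rangle g$ for all $g\in[H^1(\mathbb{R}^3)]^2$. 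Thus $\widetilde R_A(z)$ is the claimed bounded extension of $(T_A-z)^{-1}\langle D\rangle\big|_{[H^1(\mathbb{R}^3)]^2}$. Finally, since $\langle D\rangle^{-1}\varphi\in[H^1(\mathbb{R}^3)]^2$ for every $\varphi\in\mathcal H$, applying this identity to $g=\langle D\rangle^{-1}\varphi$ gives $\widetilde R_A(z)\langle D\rangle^{-1}\varphi=(T_A-z)^{-1}\varphi$, which is (\ref{eqn:R-T0}).

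The only genuine obstacle is what makes the naive approach fail: one cannot simply write $(T_A-z)^{-1}\langle D\rangle=\big(I+(T_A-z)^{-1}\sigma\cdot A\big)(T_0-z)^{-1}\langle D\rangle$ and declare both factors bounded, because $(T_0-z)^{-1}\langle D\rangle$ lands only in $[H^1(\mathbb{R}^3)]^2$ with no spare smoothing, whereas $\sigma\cdot A$ needs the extra room of $[H^{1/2}(\mathbb{R}^3)]^2$ to act (cf. Lemma \ref{lem:WD-sigma}). The duality route bypasses this difficulty. Alternatively, one could run a Neumann series for $(T_A-z)^{-1}$ at $z=it$ with $|t|$ large: writing $(T_0-z)^{-1}(\sigma\cdot A)=\big((T_0-z)^{-1}\langle D\rangle\big)\big(\langle D\rangle^{-1}(\sigma\cdot A)\big)$ as a product of two bounded operators, the first of which has norm $O(1/|t|)$, one gets operator norm $<1$, hence a norm-convergent series $\sum_n\big[(T_0-z)^{-1}\sigma\cdot A\big]^n(T_0-z)^{-1}\langle D\rangle$, and then passes to general $z\in\mathbb{C}\setminus\mathbb{R}$ by the first resolvent identity; this works but is longer than the adjoint argument above.
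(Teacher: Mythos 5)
Your proof is correct, and its core is the same as the paper's: you define $\widetilde R_A(z)$ as the adjoint of $\langle D\rangle(T_A-\bar z)^{-1}$ and then verify by duality that it agrees with $(T_A-z)^{-1}\langle D\rangle$ on $[H^1({\mathbb R}^3)]^2$, exactly as in the paper. The only real divergence is in how you establish that $\langle D\rangle(T_A-z)^{-1}$ is bounded: the paper shows this operator is closed (via a sequence argument using that $(T_A-z)^{-1}$ maps $\mathcal H$ into $[H^1]^2$) and then invokes the closed graph theorem, whereas you use the Kato--Rellich consequence of the Balinsky--Evans bound (\ref{eqn:RangeSA-1}) with $\epsilon<1$, namely the equivalence of the $T_A$- and $T_0$-graph norms together with (\ref{eqn:RangeT0-1}) and the identity $T_A(T_A-z)^{-1}=I+z(T_A-z)^{-1}$; this is slightly more quantitative (it yields explicit constants) and avoids the soft closed-graph step, at the cost of invoking the $\epsilon<1$ relative bound, which the paper's argument does not need at this point. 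One caveat on your dispensable closing aside: the claim that $(T_0-it)^{-1}\langle D\rangle$ has norm $O(1/|t|)$ is false, since its Fourier-multiplier norm is $\sup_\xi\langle\xi\rangle(|\xi|^2+t^2)^{-1/2}$, which tends to $1$ as $|\xi|\to\infty$ for each fixed $t$; if you want the Neumann-series route, the smallness of $\Vert(T_0-it)^{-1}(\sigma\cdot A)\Vert$ for large $|t|$ should instead be extracted from (\ref{eqn:RangeSA-1}) (e.g.\ via its adjoint bound $\Vert(\sigma\cdot A)(T_0+it)^{-1}\Vert\le\epsilon+k_\epsilon/|t|$). Since that aside is not used in your actual argument, the proof itself stands.
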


\begin{proof}
We first show that $\langle D \rangle(T -z)^{-1}$ is
a closed operator in $\mathcal H$. 
To this end, suppose that $\{ \varphi_j \}$ is a 
sequence in $\mathcal H$ such that
$\varphi_j \to 0$ in $\mathcal H$ and 
$\langle D \rangle (T - z)^{-1}\varphi_j \to \psi$ in 
$\mathcal H$ as $j \to \infty$. 
Then $\{ (T - z)^{-1}\varphi_j \}$ is a Cauchy sequence
in $\big[ H^1({\mathbb R}^3) \big]^2$, hence
there exists a $\widetilde \psi \in \big[ H^1({\mathbb R}^3) \big]^2$
such that
\begin{equation}   \label{eqn:R-T1}
(T - z)^{-1}\varphi_j \to \widetilde \psi
\;\mbox{ in } \big[ H^1({\mathbb R}^3) \big]^2 
\;\mbox{ as } j \to \infty.
\end{equation}
Since the topology of $\big[ H^1({\mathbb R}^3) \big]^2$
is stronger than that of $\mathcal H$,
 (\ref{eqn:R-T1})
implies that
\begin{equation}   \label{eqn:R-T2}
(T - z)^{-1}\varphi_j \to \widetilde \psi
\;\mbox{ in } {\mathcal H}
\;\mbox{ as } j \to \infty.
\end{equation}
On the other hand, since $\varphi_j \to 0$ in $\mathcal H$, and
since $(T - z)^{-1}$ is a bounded operator in $\mathcal H$,
we see that
\begin{equation}   \label{eqn:R-T3}
(T - z)^{-1} \varphi_j \to  0  \; \mbox{ in }  \mathcal H
\end{equation}
as $j \to \infty$. Combining (\ref{eqn:R-T2}) and 
 (\ref{eqn:R-T3}), we see that $\widetilde\psi = 0$.
This fact, together with (\ref{eqn:R-T1}), 
$\langle D \rangle (T - z)^{-1}\varphi_j \to 0$ in 
$\mathcal H$ as $j \to \infty$. Hence $\psi=0$.
We have thus shown that $\langle D \rangle (T - z)^{-1}$
is a closed operator. Noting that 
$\mbox{Dom}(\langle D \rangle (T - z)^{-1})=\mathcal H$,
we can conclude from the Banach closed graph theorem
that  
$\langle D \rangle (T - z)^{-1}$  is a bounded
operator in $\mathcal H$, which will be  
denoted by $Q_A(z)$.

We now put  $\widetilde R_A(z):= Q_A(\overline z)^*$, where 
$Q_A(\overline z)^*$ denotes the adjoint operator of $Q_A(\overline z)$.
Then for any $\varphi \in \mathcal H$ and any
$\psi \in \big[ H^1({\mathbb R}^3) \big]^2$,
we have
\begin{gather}
\begin{split}     \label{eqn:R-T4}
(\varphi, \, \widetilde R_A(z)\psi)_{\mathcal H}
&= 
( Q_A(\overline z) \varphi, \, \psi)_{\mathcal H}     \\
&=
(\langle D \rangle (T - {\overline z})^{-1} \varphi, \, \psi)_{\mathcal H}  
\\ &=
( \varphi, \,  (T - z)^{-1}\langle D \rangle \psi)_{\mathcal H}. 
\end{split}
\end{gather}
It follows from (\ref{eqn:R-T4}) that
\begin{equation}    \label{eqn:R-T5}
\widetilde R_A(z)\psi
=(T - z)^{-1}\langle D \rangle\psi
\end{equation}
for all $\psi \in \big[ H^1({\mathbb R}^3) \big]^2$.
Replacing $\psi$ in (\ref{eqn:R-T5})
with $\langle D \rangle^{-1} \varphi$, 
$\varphi \in \mathcal H$, we get
(\ref{eqn:R-T0}).
\end{proof}

\vspace{5pt}

\begin{proof}[{\bf Proof of Proposition \ref{prop:WD-spec}}]
Since $\sigma(T_0)=\sigma_{\rm ess}(T_0)=\mathbb R$,
it is sufficient to show that 
\begin{equation}   \label{eqn:WD-spec1}
\sigma_{\rm ess}(T_A)=\sigma_{\rm ess}(T_0).
\end{equation}
To this end, we shall prove that the difference
$(T_A +i )^{-1} -  (T_0 +i )^{-1}$
is a compact operator in $\mathcal H$. Then, this fact
implies (\ref{eqn:WD-spec1}); 
see Reed and Simon \cite[p.113, Corollary 1]{ReedSimon4}.

We see that
\begin{align}   
&(T_A +i )^{-1} -  (T_0 +i )^{-1}    \nonumber\\
=& (T_A +i )^{-1} 
    (\sigma \cdot A) (T_0 +i )^{-1} \nonumber \\ 
=& \widetilde R_A(-i)
\{ \langle D \rangle^{-1} (\sigma\cdot A)   
\langle D\rangle^{-1/2}  \}  
\{ \langle D\rangle^{1/2}  (T_0 +i )^{-1} \},   \label{eqn:WD-spec2}
\end{align}
where we have used Lemma \ref{lem:WD-T1} in  (\ref{eqn:WD-spec2}).
It follows from Lemmas \ref{lem:WD-T0}--\ref{lem:WD-T1}
that (\ref{eqn:WD-spec2}) makes sense as a product of
three bounded operators in ${\mathcal H}$ and that 
the product is a compact operator in ${\mathcal H}$. 
\end{proof}

Proposition \ref{prop:WD-spec}, together with Theorem
\ref{thm:SSDO-main3}, gives the following result 
on the spectrum of the magnetic Dirac operator
$H_{\! A}$.

\begin{thm}   \label{thm:BE-1}
Let Assumption\,{\rm(BE)} be satisfied. Then 
$$
\sigma(H_{\! A})=
\sigma_{\rm ess}(H_{\! A})=(-\infty,\, -m] \cup[m, \, \infty).
$$
\end{thm}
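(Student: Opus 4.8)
The plan is to combine the spectral result for the Weyl--Dirac operator in Proposition~\ref{prop:WD-spec} with the abstract supersymmetric machinery of Section~\ref{sec:Supersymmetric}. Concretely, under Assumption~(BE), $-\alpha\cdot A$ is infinitesimally small with respect to $H_0=\alpha\cdot D+m\beta$ (this was noted in the discussion preceding the statement, and follows from the corresponding fact for $-\sigma\cdot A$ relative to $T_0$ via the block structure), so $H_{\!A}$ is self-adjoint on $\big[H^1({\mathbb R}^3)\big]^4$ and, recalling~(\ref{eqn:MDO}), is a supersymmetric Dirac operator of the form~(\ref{eqn:SSDO-SA-1}) with $T=T_A$ self-adjoint in ${\mathcal H}=\big[L^2({\mathbb R}^3)\big]^2$.

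The first step is to invoke Proposition~\ref{prop:WD-spec}, which gives $\sigma(T_A)={\mathbb R}$; in particular $\sigma(T_A)\supset[0,+\infty)$, so the hypothesis of Theorem~\ref{thm:SSDO-main3} is satisfied. The second step is simply to apply Theorem~\ref{thm:SSDO-main3} to $H=H_{\!A}$, yielding $\sigma(H_{\!A})=(-\infty,\,-m]\cup[m,\,+\infty)$ and $\sigma_d(H_{\!A})=\emptyset$. The third step is to identify this set with the essential spectrum: since $\sigma_d(H_{\!A})=\emptyset$, every point of $\sigma(H_{\!A})$ is in $\sigma_{\rm ess}(H_{\!A})$, so $\sigma(H_{\!A})=\sigma_{\rm ess}(H_{\!A})=(-\infty,\,-m]\cup[m,\,+\infty)$, which is the claim. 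Alternatively one can note directly that $\sigma_{\rm ess}(H_{\!A})=\sigma_{\rm ess}(H_0)$ by relative compactness of $-\alpha\cdot A$, since $H_0^2=-\Delta+m^2$ componentwise gives $\sigma_{\rm ess}(H_0)=(-\infty,\,-m]\cup[m,\,+\infty)$.

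There is essentially no obstacle here: the statement is a corollary, and all the work has been front-loaded into Proposition~\ref{prop:WD-spec} (whose proof rests on Lemmas~\ref{lem:WD-T0}--\ref{lem:WD-T1} and the resolvent identity~(\ref{eqn:WD-spec2})) and into Theorem~\ref{thm:SSDO-main3}. The only point requiring a word of care is the passage from Assumption~(BE) for $-\sigma\cdot A$ relative to $T_0$ to the corresponding infinitesimal bound for $-\alpha\cdot A$ relative to $H_0$, so that $H_{\!A}$ genuinely falls under the scope of the abstract framework; this is routine given the block form of $\alpha\cdot A$ and the identity $\|(T_0\pm i)\varphi\|^2=\|\langle D\rangle\varphi\|^2$ from~(\ref{eqn:RangeT0-1}). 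Accordingly the proof is two lines: cite Proposition~\ref{prop:WD-spec} to verify the hypothesis, then cite Theorem~\ref{thm:SSDO-main3} to conclude.
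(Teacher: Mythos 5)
Your proposal is correct and follows exactly the paper's route: the paper proves Theorem \ref{thm:BE-1} by combining Proposition \ref{prop:WD-spec} (which gives $\sigma(T_A)={\mathbb R}$) with Theorem \ref{thm:SSDO-main3} applied to the supersymmetric form (\ref{eqn:MDO}) of $H_{\!A}$, relying on the self-adjointness of $H_{\!A}$ on $\big[H^1({\mathbb R}^3)\big]^4$ established in the discussion of Assumption (BE). Your additional remarks on $\sigma_d(H_{\!A})=\emptyset$ and the infinitesimal bound for $-\alpha\cdot A$ only make explicit what the paper leaves implicit.
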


\vspace{5pt}

We now state the main results in this section,
which are concerned with the eigenspaces 
corresponding to the threshold eigenvalues 
of the magnetic Dirac operator $H_{\! A}$.

\begin{thm}   \label{thm:BE-2}
Let Assumption\,{\rm(BE)} be satisfied. Then
\vspace{4pt}

{\rm (i)} $\mbox{\rm Ker} (H_{\! A} -m)$ is non-trivial
if and only if 
$\mbox{\rm Ker} (H_{\! A} +m)$ is non-trivial; in other
words,
\begin{gather*}
\begin{split}
{}&\big\{ \,
A \in \big[ L^3({\mathbb R}^3) \big]^3 
\; \big|  \;
\mbox{\rm Ker} (H_{\! A} -m) \not= \{ 0 \} \, \big\}   \\
{}&\quad=
\big\{ \,
A \in \big[ L^3({\mathbb R}^3) \big]^3 
\; \big|  \;
\mbox{\rm Ker} (H_{\! A} + m) \not= \{ 0 \} \, \big\}.
\end{split}
\end{gather*}

\vspace{4pt}

{\rm (ii)} There exists a constant $c$ such that
\begin{equation}  \label{eqn:Daubechies}
\mbox{\rm dim}
\big( \mbox{\rm Ker} (H_{\! A} -m)\big)
=
\mbox{\rm dim}
\big( \mbox{\rm Ker} (H_{\! A} + m)\big)
\le
c \int_{{\mathbb R}^3} |A(x)|^3 \, dx.
\end{equation}
Moreover, the dimension of 
$\mbox{\rm Ker} (H_{\! A} \mp m)$ 
is independent of
$m$. 

\vspace{4pt}

{\rm (iii)} The set 
$\big\{ \,
A \in \big[ L^3({\mathbb R}^3) \big]^3 
\; \big|  \;
\mbox{\rm Ker} (H_{\! A} \mp m) = \{ 0 \} \, \big\}$
contains  an open dense subset of 
$\big[ L^3({\mathbb R}^3) \big]^3$.
\end{thm}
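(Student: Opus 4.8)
The plan is to deduce everything in Theorem \ref{thm:BE-2} from the supersymmetric reduction of section \ref{sec:Supersymmetric} together with the known results of Balinsky and Evans \cite{BalinEvan2} on zero modes of the Weyl--Dirac operator $T_A$. By Corollary \ref{cor:SSDO-C1} applied to the operator $H_{\!A}$ written in the form (\ref{eqn:MDO}) (valid here because Assumption(BE) makes $H_{\!A}$ a genuine supersymmetric Dirac operator with $T_A$ self-adjoint, as explained before Proposition \ref{prop:WD-spec}), we have $\mbox{Ker}(H_{\!A}-m)=\mbox{Ker}(T_A)\oplus\{0\}$ and $\mbox{Ker}(H_{\!A}+m)=\{0\}\oplus\mbox{Ker}(T_A)$, and in particular
\[
\mbox{dim}\big(\mbox{Ker}(H_{\!A}-m)\big)=\mbox{dim}\big(\mbox{Ker}(H_{\!A}+m)\big)=\mbox{dim}\big(\mbox{Ker}(T_A)\big).
\]
Assertion (i) is then immediate: both kernels are non-trivial precisely when $\mbox{Ker}(T_A)\neq\{0\}$, and since $T_A$ depends only on $A$, the two displayed sets of vector potentials coincide.

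For assertion (ii) I would invoke the Balinsky--Evans bound: in \cite{BalinEvan2} they prove, under Assumption(BE), that $\mbox{dim}(\mbox{Ker}(T_A))\le c\int_{{\mathbb R}^3}|A(x)|^3\,dx$ for an absolute constant $c$ (this is the Dirac-operator analogue of the Daubechies-type estimate for Pauli operators). Combining this with the dimension identity above gives (\ref{eqn:Daubechies}). The independence of $m$ was already noted in section \ref{sec:Supersymmetric} — the eigenspaces of $H$ at $\pm m$ equal $\mbox{Ker}(T)\oplus\{0\}$ and $\{0\}\oplus\mbox{Ker}(T^*)$, which do not involve $m$ — so I would just point back to Corollary \ref{cor:SSDO-C1} and the remark following it.

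For assertion (iii), the strategy is to transport the corresponding sparseness statement for zero modes of $T_A$, proved by Balinsky and Evans \cite{BalinEvan2} in $\big[L^3({\mathbb R}^3)\big]^3$, through the equivalence $\mbox{Ker}(H_{\!A}\mp m)=\{0\}\iff\mbox{Ker}(T_A)=\{0\}$. Concretely: the map $A\mapsto A$ is the identity, so the set $\{A:\mbox{Ker}(H_{\!A}\mp m)=\{0\}\}$ is literally the same subset of $\big[L^3({\mathbb R}^3)\big]^3$ as $\{A:\mbox{Ker}(T_A)=\{0\}\}=\{A:0\notin\sigma_p(T_A)\}$. Balinsky and Evans show this latter set contains an open dense subset; openness follows because the relative bound (\ref{eqn:RangeSA-1}) makes $A\mapsto T_A$ continuous in the norm-resolvent sense on $\big[L^3\big]^3$, so the property of having a zero eigenvalue is unstable under small $L^3$ perturbations, and density follows by perturbing a given $A$ slightly to push $0$ out of the point spectrum. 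I would state (iii) as a direct citation of \cite[the relevant lemma/theorem]{BalinEvan2} combined with the kernel equivalence.

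The main obstacle is essentially bookkeeping rather than analysis: one must check that the quoted results of \cite{BalinEvan2} are stated (or readily restated) for the self-adjoint realization $T_A$ with $\mbox{Dom}(T_A)=\big[H^1({\mathbb R}^3)\big]^2$ defined here as an operator sum, and in particular that the constant $c$ in the Daubechies-type bound is the same one appearing in (\ref{eqn:Daubechies}); and for (iii) that ``open dense'' is meant with respect to the $\big[L^3({\mathbb R}^3)\big]^3$ topology, which is exactly the ambient topology in the statement. Once the translation dictionary between $H_{\!A}$ and $T_A$ furnished by Corollary \ref{cor:SSDO-C1} is in place, all three assertions reduce to citing \cite{BalinEvan2}.
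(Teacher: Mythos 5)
Your proposal follows exactly the paper's own route: reduce both threshold eigenspaces to $\mbox{Ker}(T_A)$ via Corollary \ref{cor:SSDO-C1} applied to $H_{\!A}$ in the form (\ref{eqn:MDO}), then cite Balinsky--Evans \cite{BalinEvan2} (their Theorem 3 for the dimension bound in (ii), their Theorem 2 for the open dense set in (iii)), with $m$-independence read off from the abstract kernel description. This matches the paper's proof, so nothing further is needed; your informal sketch of why the zero-mode-free set is open and dense is dispensable since, like the paper, you ultimately rest that step on the cited results.
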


\begin{proof}
By Corollary \ref{cor:SSDO-C1}, we see that
\begin{gather}  \label{eqn:SSDOMagnetic}
\begin{split}
\mbox{Ker}(T_A) \mbox{ is trivial}   
&\Longleftrightarrow  
\; \mbox{\rm Ker} (H_{\! A} - m) \mbox{ is trivial}   \\
{}&\Longleftrightarrow  
\; \mbox{\rm Ker} (H_{\! A} + m) \mbox{ is trivial. } 
\end{split}
\end{gather}
Assertion (i) is equivalent to  (\ref{eqn:SSDOMagnetic}).
Assertion (ii) follows from  Corollary \ref{cor:SSDO-C1}
and Balinsky and Evans \cite[Theorem 3]{BalinEvan2}.
Assertion (iii) follows from  Corollary \ref{cor:SSDO-C1}
and Balinsky and Evans \cite[Theorem 2]{BalinEvan2}.
\end{proof}

\vspace{3pt}

\begin{rem} {\rm
Assertions {\rm(i)} and {\rm(ii)} of Theorem
\ref{thm:BE-2} mean the following facts{\rm:} 
The threshold energy $m$  
is an eigenvalue  of $H_{\! A}$ 
if and only if
the threshold energy $-m$ 
is an eigenvalue of $H_{\! A}$.
If this is the case, their multiplicity are the same.
}
\end{rem}

\vspace{3pt}

\begin{rem}   {\rm
As for the best constant in 
the inequality {\rm(\ref{eqn:Daubechies})},
see Balinsky and Evans \cite[Theorem 3]{BalinEvan2}.
}
\end{rem}

\vspace{10pt}

\section{The structure of the set of vector potentials  
yielding $\pm m$ modes} 
\label{sec:Structure}

In this section, we shall discuss
a property of non-locality of magnetic vector
potentials as well as
 the sparseness of the set
of vector potentials $A$ which give rise to 
$\pm m$ modes of $H_{\! A}$
in the sprit of Elton \cite{Elton},
where he investigated  Weyl-Dirac operators.
We make the following assumption:

\vspace{15pt}

\noindent
\textbf{Assumption(E).}

\noindent 
Each $A_j$ ($j=1$, $2$, $3$) is a real-valued 
continuous function such that $A_j(x) = o(|x|^{-1})$
as $|x| \to \infty$.

\vspace{10pt}
It is straightforward to see that under Assumption(E), 
$-\sigma\cdot A$ is a bounded self-adjoint operator
in the Hilbert space 
$\mathcal H=\big[L^2({\mathbb R}^3)\big]^2$.
Hence we can define the self-adjoint
realization $T_A$ 
with $\mbox{Dom}(T_A)=\big[H^1({\mathbb R}^3)\big]^2$
as the
operator sum of $T_0$ and $-\sigma\cdot A$.

Also, it is straightforward to see that
 $-\alpha\cdot A$ is 
a bounded self-adjoint operator
in the Hilbert space 
${\mathcal K}=\big[L^2({\mathbb R}^3)\big]^4$,
hence we can define the
self-adjoint  realization $H_{\! A}$
with 
$\mbox{Dom}(H_{\! A})=\big[H^1({\mathbb R}^3)\big]^4$ in
${\mathcal K}$ as the operator sum of $H_0$ and
$-\alpha\cdot A$. Therefore, in the same way as in
section 5,
 we can regard $H_{\! A}$ as
a supersymmetric Dirac operator, and 
 apply the results in section 2
to $H_{\! A}$.

We note that under Assumption(E),
$(-\sigma\cdot A)(T_0 + i)^{-1}$ is a compact
operator in  
$\mathcal H$.
Hence, 
in the same way as in the proof
of Proposition \ref{prop:WD-spec}, 
we can show that
$\sigma(T_A) =\mathbb R$. 
This fact, together with Theorem \ref{thm:SSDO-main3},
implies the following result.

\begin{thm}    \label{thm:decay-1}
Let Assumption\,{\rm(E)} be satisfied. Then 
$$
\sigma(H_{\! A})=
\sigma_{\rm ess}(H_{\! A})=(-\infty,\, -m] \cup[m, \, \infty).
$$
\end{thm}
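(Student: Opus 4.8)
\textbf{Proof proposal for Theorem \ref{thm:decay-1}.}
The plan is to reduce the statement about $H_{\!A}$ to the spectral identity $\sigma(T_A)=\mathbb R$ for the Weyl--Dirac operator and then invoke the abstract machinery of section \ref{sec:Supersymmetric}. Indeed, the paragraph preceding the theorem already records that under Assumption(E) the operator $-\alpha\cdot A$ is bounded and self-adjoint, so $H_{\!A}$ is a genuine supersymmetric Dirac operator of the form \eqref{eqn:MDO} with $T=T_A$ self-adjoint on $\mbox{Dom}(T_A)=\big[H^1({\mathbb R}^3)\big]^2$. Once $\sigma(T_A)\supset[0,+\infty)$ is known, Theorem \ref{thm:SSDO-main3} applies verbatim and delivers both $\sigma(H_{\!A})=(-\infty,-m]\cup[m,+\infty)$ and $\sigma_d(H_{\!A})=\emptyset$; since an operator with no discrete spectrum has $\sigma(H_{\!A})=\sigma_{\rm ess}(H_{\!A})$, the full chain of equalities in the statement follows.

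First I would establish $\sigma(T_A)=\mathbb R$. The idea is exactly the one used for Proposition \ref{prop:WD-spec}: since $\sigma(T_0)=\sigma_{\rm ess}(T_0)=\mathbb R$ (as $\sigma\cdot\xi$ has eigenvalues $\pm|\xi|$ by the anticommutation relations), it suffices to show $\sigma_{\rm ess}(T_A)=\sigma_{\rm ess}(T_0)$, and for this it is enough that the resolvent difference $(T_A+i)^{-1}-(T_0+i)^{-1}$ is compact in $\mathcal H$, by Reed and Simon \cite[p.113, Corollary 1]{ReedSimon4}. The second resolvent identity gives
\[
(T_A+i)^{-1}-(T_0+i)^{-1}=(T_A+i)^{-1}(\sigma\cdot A)(T_0+i)^{-1},
\]
so compactness of the right-hand side reduces to compactness of $(\sigma\cdot A)(T_0+i)^{-1}$ (composing a compact operator with a bounded one on either side keeps it compact). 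This last point is precisely the claim made in the remark immediately before the theorem, namely that under Assumption(E), $(-\sigma\cdot A)(T_0+i)^{-1}$ is compact in $\mathcal H$.

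The one genuine piece of work is verifying that compactness claim, which is why I would treat it as the main obstacle. The decay hypothesis $A_j(x)=o(|x|^{-1})$ is sharper than a classical $L^p$ or short-range condition, so I would argue by a standard cutoff-and-approximation scheme: write $A=A\chi_{\{|x|\le R\}}+A\chi_{\{|x|>R\}}$; for the first term, $(\sigma\cdot A\chi_{\{|x|\le R\}})(T_0+i)^{-1}$ is of the form (compactly supported bounded multiplication)$\times\langle D\rangle^{-1}\times$(bounded), and an operator of the shape $g(x)h(D)$ with $g$ bounded of compact support and $h\in L^2$ (here $h(\xi)=\langle\xi\rangle^{-1}\notin L^2(\mathbb R^3)$, so more care is needed --- one instead uses $g\in L^2_{\rm comp}$ via $g=g\langle D\rangle^{-1}\cdot\langle D\rangle g$, or the Rellich--Kondrachov embedding $H^1_{\rm loc}\hookrightarrow\hookrightarrow L^2_{\rm loc}$) is compact; for the second term, $\|\sigma\cdot A\chi_{\{|x|>R\}}\|\to 0$ as $R\to\infty$ by the $o(|x|^{-1})$ decay (in fact even $\sup_{|x|>R}|A(x)|\to0$), so $(\sigma\cdot A\chi_{\{|x|>R\}})(T_0+i)^{-1}$ tends to $0$ in operator norm. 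A norm-limit of compact operators being compact, the whole operator is compact. With $\sigma(T_A)=\mathbb R$ in hand, Theorem \ref{thm:SSDO-main3} closes the argument.
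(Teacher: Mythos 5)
Your proposal is correct and follows essentially the same route as the paper: the paper likewise notes that $(-\sigma\cdot A)(T_0+i)^{-1}$ is compact under Assumption(E), deduces $\sigma(T_A)=\mathbb R$ by the resolvent-difference argument of Proposition \ref{prop:WD-spec} (simplified here since $\sigma\cdot A$ is bounded), and then invokes Theorem \ref{thm:SSDO-main3}. Your cutoff-and-approximation sketch merely supplies the details of the compactness claim that the paper states without proof.
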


To state the main results in this section,
we need to introduce the following notation:
\begin{equation}    \label{eqn:eltonAssump}
{\mathcal A}:=
\{ \, A \, | \, A \mbox{ satisfies Assumption(E)} \, \}.
\end{equation}
We regard $\mathcal A$ as a Banach space
with the norm
$$
\Vert A \Vert_{\mathcal A}
= \sup_{x} \{ \langle x \rangle |A(x)| \}
$$

\vspace{1pt}

\begin{thm} \label{thm:elton-2}
Let Assumption{\rm(E)} be satisfied. Define
$$
{\mathcal Z}_k^{\pm} = 
\{ \, A \in \mathcal A \, | \,
  \mbox{\rm dim} (\mbox{\rm Ker}(H\mp m) ) = k
\, \}
$$
for $k=0$, $1$, $2$, $\cdots$. Then

\vspace{4pt}

{\rm (i)} ${\mathcal Z}_k^{+}={\mathcal Z}_k^{-}$ for all $k$.

\vspace{4pt}

{\rm (ii)} ${\mathcal Z}_0^{\pm}$ is an open and dense 
subset of $\mathcal A$.

\vspace{4pt}

{\rm(iii)} For any $k$ and any open subset
$\Omega (\not= \emptyset)$ of ${\mathbb R}^3$
there exists an  $A\in {\mathcal Z}_k^{\pm}$ 
such that $A \in 
\big[ C_0^{\infty}(\Omega) \big]^3$.
\end{thm}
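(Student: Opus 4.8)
The plan is to reduce everything to the corresponding statements about the Weyl--Dirac operator $T_A$ via the supersymmetric formalism of Section~\ref{sec:Supersymmetric}, and then invoke Elton's results on $T_A$. Concretely, by Corollary~\ref{cor:SSDO-C1} applied to the supersymmetric Dirac operator $H_{\!A}$ of the form (\ref{eqn:MDO}), one has $\mbox{Ker}(H_{\!A}-m)\cong\mbox{Ker}(T_A)$ and $\mbox{Ker}(H_{\!A}+m)\cong\mbox{Ker}(T_A)$ (the latter using $T_A^*=T_A$), so that
\begin{equation*}
\mbox{dim}\big(\mbox{Ker}(H_{\!A}-m)\big)=\mbox{dim}\big(\mbox{Ker}(T_A)\big)=\mbox{dim}\big(\mbox{Ker}(H_{\!A}+m)\big)
\end{equation*}
for every $A\in\mathcal A$. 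Hence, if we set $\mathcal N_k=\{A\in\mathcal A\mid \mbox{dim}(\mbox{Ker}(T_A))=k\}$, then $\mathcal Z_k^{+}=\mathcal N_k=\mathcal Z_k^{-}$, which is precisely assertion (i). For this step I would also note that Theorem~\ref{thm:decay-1} guarantees $\pm m$ are genuinely the threshold energies, so the sets $\mathcal Z_k^{\pm}$ are well defined.

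For assertions (ii) and (iii) the idea is that $\mathcal N_k$ is exactly the object Elton studies (with the same Banach space $\mathcal A$ and the same norm $\Vert A\Vert_{\mathcal A}=\sup_x\langle x\rangle|A(x)|$, since Assumption(E) matches Elton's hypotheses on the vector potential). First I would carefully check that $\mathcal A$ as defined in (\ref{eqn:eltonAssump}) coincides, as a normed space, with the space in \cite{Elton}; this amounts to observing that $A_j$ continuous with $A_j(x)=o(|x|^{-1})$ is the decay/regularity class under which $-\sigma\cdot A$ is bounded and relatively compact with respect to $T_0$. Granting this, Elton's theorem asserts that $\mathcal N_0$ is open and dense in $\mathcal A$, giving (ii), and that for each $k$ the set $\mathcal N_k$ contains elements supported in any prescribed nonempty open $\Omega\subset{\mathbb R}^3$ with $A\in[C_0^{\infty}(\Omega)]^3$, giving (iii). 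Translating back through Corollary~\ref{cor:SSDO-C1} turns each statement about $\mbox{Ker}(T_A)$ into the corresponding statement about $\mbox{Ker}(H_{\!A}\mp m)$ verbatim.

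I expect the main obstacle to be purely bookkeeping rather than conceptual: one must verify that the relative compactness of $-\sigma\cdot A$ with respect to $T_0$ under Assumption(E) (already noted in the text before Theorem~\ref{thm:decay-1}) is exactly what makes the dimension function $A\mapsto\mbox{dim}(\mbox{Ker}(T_A))$ behave the way Elton needs—upper semicontinuity in $\Vert\cdot\Vert_{\mathcal A}$ for openness of $\mathcal N_0$, and a perturbative/gauge-type construction for the density and the $C_0^\infty(\Omega)$ statements. The only genuine subtlety is confirming that our normalization and sign conventions for $T_A=\sigma\cdot(D-A)$ agree with Elton's, so that his conclusions transfer without modification; once that is settled, the proof is a three-line invocation of Corollary~\ref{cor:SSDO-C1} together with the cited results of \cite{Elton}. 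I would therefore write the proof as: (i) from Corollary~\ref{cor:SSDO-C1}; (ii) from Corollary~\ref{cor:SSDO-C1} and \cite{Elton}; (iii) from Corollary~\ref{cor:SSDO-C1} and \cite{Elton}, mirroring the structure of the proof of Theorem~\ref{thm:BE-2}.
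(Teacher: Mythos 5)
Your proposal is correct and follows essentially the same route as the paper: reduce via Corollary \ref{cor:SSDO-C1} to the kernel of the Weyl--Dirac operator $T_A$ and then cite Elton \cite[Theorem 1]{Elton} for the openness, density, and $C_0^\infty(\Omega)$ statements. The extra bookkeeping you flag (matching $\mathcal A$ and its norm with Elton's setting and checking sign conventions for $T_A=\sigma\cdot(D-A)$) is harmless but not needed beyond what the paper already notes before Theorem \ref{thm:decay-1}.
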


\begin{proof}
Assertion (i) is a direct consequence of 
Corollary \ref{cor:SSDO-C1}.
Assertions (ii) and (iii) follows from
Corollary \ref{cor:SSDO-C1} and 
Elton \cite[Theorem 1]{Elton}.
\end{proof}

It is of some interest to point out a conclusion
following from Theorem \ref{thm:ABEF-main} and 
Assertion (iii) of Theorem \ref{thm:elton-2}.
Namely, there are (at least) countably infinite number of
vector potentials $A$ with compact support
such that the corresponding Dirac operators
$H_{\! A}$ have $\pm m$ modes
$f^{\pm}$ 
with  the property (\ref{eqn:lim-1-WD}).
The $\pm m$ modes $f^{\pm}$ behave like 
$|f^{\pm}(x)| \asymp |x|^{-2}$ for $|x| \to \infty$,
in spite of the fact that the vector potentials
and the corresponding magnetic fields vanish outside
 bounded regions.  It is obvious that
this phenomenon describes a
certain kind of non-locality.

Also, it is of some interest to mention that
$H_{\! A}$ does not have $\pm m$ resonances
if the support of vector potential $A$ is compact.
This is an immediate consequence of Theorem \ref{thm:NER-1}
in the next section.

\vspace{10pt}

\section{Non existence of $\pm m$ resonances}  \label{sec:Resonances}

In this section, we will work in bigger Hilbert spaces than
${\mathcal H}= [L^2({\mathbb R}^3)]^2$ 
and ${\mathcal K}=[L^2({\mathbb R}^3)]^4$.
Therefore, the results on the supersymmetric Dirac operators
in section \ref{sec:Supersymmetric} are not applicable
in this section.

In this section, 
we shall occasionally write the inner product  of ${\mathcal H}$
as
\begin{equation*}
    (\varphi, \psi)_{{\mathcal H}}
 = \int_{{\mathbb R}^3} 
(  \varphi(x), \, \psi(x)  )_{{\mathbb C}^2} dx
\end{equation*}
for $\varphi$, $\psi\in{\mathcal H}$,
 where 
$( \cdot \, ,  \cdot)_{{}_{{\mathbb C}^2}}$ denotes 
 the inner product of
${\mathbb C}^2$.

 We need to introduce weighted $L^2$ spaces in order to
 deal with $\pm m$ resonances,     which do not belong
 to the Hilbert space $\mathcal K$.    
By $L^{2, s}({\mathbb R}^3)$, we mean the weighted $L^2$ space
defined by
\begin{equation*}
 L^{2, s}({\mathbb R}^3)
  :=\{ \, u \; | \; \langle x \rangle^s u 
 \in L^2({\mathbb R}^3) \, \} \qquad (s \in \mathbb R)
\end{equation*}
where  $\langle x \rangle = \sqrt{1 + |x|^2 \,}$, and
we set
\begin{equation*}
   {\mathcal L}^{2,s} = [L^{2,s}({\mathbb R}^3)]^4.
\end{equation*}

\vspace{6pt}

\begin{defn}  \label{defn:Mresonance}
{\rm
 By an $m$ resonance (resp. a $-m$ resonance), 
we mean a function 
$f \in {\mathcal L}^{2, -s}\setminus {\mathcal K}$,
$0< s \le 3/2$,   
such that
$H_{\!A}f = mf$ 
(resp. $H_{\!A}f = -mf$ )
in the distributional sense.}
\end{defn}

\vspace{10pt}

We would like to caution that in Definition \ref{defn:Mresonance}
 one has to take the meaning of
$H_{\!A}f =\pm mf$ in the distributional sense,
because of the reason that
 $\pm m$ resonances 
do not belong to the Hilbert space ${\mathcal K}$,
hence do not belong to the domain of the self-adjoint 
realization of $H_{\!A}$.
For this reason, we let
$H_{\!A}$ stand for the formal differential operator
throughout this section,
in spite of the fact that 
$H_A$ has the unique self-adjoint realization in
$\mathcal K$ under the assumption of Theorem \ref{thm:NER-1}
below.
We hope this will not cause any confusion.

\vspace{8pt}

\begin{thm} \label{thm:NER-1}
Assume that
each element $A_j(x)$ 
{\rm(}$j=1, \, 2, \, 3${\rm)} of $A(x)$ is 
a measurable function satisfying
\begin{equation} \label{eqn:R-1}
| A_j(x) | \le C \langle x \rangle^{-\rho} 
\quad   ( \rho >3/2 ),
\end{equation}
where $C$ is a positive constant.
Suppose that
$f = {}^t (\varphi^+ , \, \varphi^-)$
belongs to ${\mathcal  L}^{2,-s}$  for some $s$
with $0 < s < \min (1, \, \rho -1)$ and  
 satisfies $H_A f=mf$ 
{\rm(}resp. $H_A f= -mf \,${\rm)} in the distributional sense.
Then  $f \in [H^1({\mathbb R}^3)]^4$ and $\varphi^- =0$
{\rm(}resp. $\varphi^+ =0${\rm)}.
\end{thm}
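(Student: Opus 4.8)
The plan is to establish the regularity statement $f\in[H^1(\mathbb R^3)]^4$ first, and then to exploit the supersymmetric block structure together with an integration-by-parts (energy) identity to force $\varphi^-=0$. For the regularity step, I would write $f\in\mathcal L^{2,-s}$ with $0<s<\min(1,\rho-1)$ and observe that, since $|A_j(x)|\le C\langle x\rangle^{-\rho}$ with $\rho>3/2>1+s$, the product $\alpha\cdot A(x)f$ lies in $\mathcal L^{2,-s+\rho}\subset\mathcal L^{2,s}\subset\mathcal L^{2,0}=\mathcal K$, at least locally; more carefully, $\langle x\rangle^{-s}|A(x)|\le C\langle x\rangle^{-s-\rho}$ is bounded, so $A f\in\mathcal L^{2,-s}$ and in fact $\langle x\rangle^{s}Af\in\mathcal K$ because $2(\rho-s)>1$ is not even needed — boundedness of $\langle x\rangle^{s-\rho}$ suffices once we know $\langle x\rangle^{-s}f\in\mathcal K$. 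From the distributional equation $H_0 f=\pm m f+\alpha\cdot A f$ we then get $\alpha\cdot D f=\pm mf\mp m\beta f+\alpha\cdot A f\in\mathcal L^{2,-s}$, which gives $Df\in\mathcal L^{2,-s}$; bootstrapping once (the right-hand side is now in $\mathcal L^{2,-s}$ with one more derivative's worth of room) upgrades this to $f\in[H^1_{\mathrm{loc}}]^4$ with the correct decay, and a standard weighted elliptic estimate for $H_0$ on $\mathbb R^3$ (using that $\pm m$ is the threshold, so the symbol $\alpha\cdot\xi+m\beta\mp m$ vanishes only at $\xi=0$, a set of measure zero, and degenerates mildly there) yields $f\in[H^1(\mathbb R^3)]^4\subset\mathcal K$. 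This is the step I expect to be the main obstacle: one must handle the degeneracy of $(H_0\mp m)$ at $\xi=0$, and the condition $s<1$ together with $\rho>3/2$ is exactly what is needed so that the low-frequency part $\widehat f(\xi)\sim|\xi|^{-1}\widehat{(Af)}(\xi)$ near $\xi=0$ lands back in $L^2$ — this is a Hardy-type / weighted Sobolev argument, and it is presumably the content of the authors' lemmas invoked here (cf. Sait\={o}--Umeda).

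Once $f\in[H^1]^4$, the rest is the supersymmetric argument of section \ref{sec:Supersymmetric}, now rigorously applicable. Since $f\in\mathcal K=\mathrm{Dom}(H_A)$ and $H_Af=mf$ in the distributional sense, $f$ is a genuine eigenfunction of the self-adjoint operator $H_A$, and writing $f={}^t(\varphi^+,\varphi^-)$ with $T_A=\sigma\cdot(D-A)$ self-adjoint on $[H^1]^2$, the block identity $H_Af=mf$ becomes $T_A\varphi^-=0$ and $T_A\varphi^+=2m\varphi^-$, exactly as in \eqref{eqn:SSDO-K5}. Then the computation \eqref{eqn:SSDO-K6} gives
\begin{equation*}
\Vert T_A\varphi^+\Vert_{\mathcal H}^2=(T_A\varphi^+,\,2m\varphi^-)_{\mathcal H}=(\varphi^+,\,2m\,T_A^*\varphi^-)_{\mathcal H}=(\varphi^+,\,2m\,T_A\varphi^-)_{\mathcal H}=0,
\end{equation*}
so $T_A\varphi^+=0$ and hence $\varphi^-=(2m)^{-1}T_A\varphi^+=0$. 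The resonance case $H_Af=-mf$ is identical with the roles of $\varphi^+$ and $\varphi^-$ interchanged, using $T_A^*=T_A$, yielding $\varphi^+=0$. Thus the only subtlety is making the integration by parts legitimate, which is precisely why the $[H^1]^4$ regularity had to be proved first; after that, Theorem \ref{thm:NER-1} follows from Theorems \ref{thm:SSDO-main1}--\ref{thm:SSDO-main2} applied to $T_A$.
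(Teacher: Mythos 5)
There is a genuine gap, and it sits exactly where you suspected: the regularity step. You propose to prove $f\in[H^1({\mathbb R}^3)]^4$ \emph{first}, by a ``standard weighted elliptic estimate'' for $H_0\mp m$, and only then run the supersymmetric energy identity. But no such estimate is available at the threshold: the symbol $\alpha\cdot\xi+m\beta\mp m$ degenerates to \emph{second} order at $\xi=0$ on a two-dimensional spectral subspace, since $(\alpha\cdot\xi+m\beta-m)^{-1}=(\alpha\cdot\xi+m\beta+m)/|\xi|^2$; the relevant low-frequency behaviour is $|\xi|^{-2}\widehat{(Af)}(\xi)$, not the $|\xi|^{-1}$ you wrote, and with $Af\in\mathcal L^{2,\rho-s}$, $\rho-s>1$, this is generically \emph{not} square integrable near $\xi=0$ in dimension three (in position space the corresponding kernel component is Newtonian, producing $|x|^{-1}$ tails, i.e.\ precisely a resonance profile). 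Indeed, if a naive Fourier/Hardy argument of this kind worked, it would work equally well for two-dimensional Pauli--Dirac operators, where threshold resonances \emph{do} exist (Aharonov--Casher); the non-existence proved in Theorem \ref{thm:NER-1} is a structural fact that requires the component splitting, not a consequence of ellipticity away from a measure-zero set. So your step 1 cannot be carried out as sketched, and since step 2 presupposes $f\in\mathrm{Dom}(H_A)$, the argument as a whole does not close.

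The paper's proof runs in the opposite logical order, and this inversion is the essential idea you are missing. From $H_Af=mf$ one first reads off the two distributional equations $\sigma\cdot(D-A)\varphi^-=0$ and $\sigma\cdot(D-A)\varphi^+=2m\varphi^-$. The first is a \emph{massless} Weyl--Dirac zero-mode equation, so the two-component zero-resonance theorem of Sait\={o}--Umeda \cite[Theorem 2.2]{SaitoUmeda2} applies and gives $\varphi^-\in[H^1({\mathbb R}^3)]^2$ at the outset, while $\varphi^+$ is still only known to lie in $[L^{2,-s}]^2$. The energy identity is then established in this asymmetric situation: Lemma \ref{lem:resonlem1} shows $(\sigma\cdot D)\varphi^+,(\sigma\cdot A)\varphi^+\in\mathcal H$ and justifies the pairing $\big((\sigma\cdot D)\varphi^+,\varphi^-\big)_{\mathcal H}=\big((\sigma\cdot A)\varphi^+,\varphi^-\big)_{\mathcal H}$ by a cutoff $\chi_n$ plus mollification argument, with the boundary term controlled using $s<1$ --- this careful work is needed precisely because $\varphi^+\notin\mathcal H$, so one cannot simply invoke self-adjointness of $T_A$ as in your display. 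This yields $\varphi^-=0$ first; only then does $\varphi^+$ satisfy $\sigma\cdot(D-A)\varphi^+=0$, and a second application of \cite[Theorem 2.2]{SaitoUmeda2} gives $\varphi^+\in[H^1]^2$, hence $f\in[H^1]^4$. In short: in the paper the $H^1$ regularity of $\varphi^+$ is a \emph{consequence} of $\varphi^-=0$, whereas you need it as a \emph{hypothesis}, and the tool you invoke to supply it does not exist. To repair your write-up you would have to replace step 1 by the component-wise use of the Weyl--Dirac zero-resonance theorem and carry out the regularized integration by parts of Lemma \ref{lem:resonlem1}.
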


Theorem \ref{thm:NER-1} implies the non-existence
of $\pm m$ resonances in the sense of
Definition \ref{defn:Mresonance}, 
as well as in the sense described
in the following theorem.

\begin{thm} \label{thm:NER-2}
Let $A(x)$ satisfy the same assumption as in Theorem \ref{thm:NER-1}.
Suppose that
$f$
belongs to
$ [L^2_{\rm loc}({\mathbb R}^3)]^4$ and 
satisfies either equation of 
$H_A f=\pm m f$ in the distributional sense.
In addition, suppose that 
$f$ has the asymptotic expansion
\begin{equation}   \label{eqn:R-2}
f(x) = C_1 |x|^{-1} + C_2 |x|^{-2} + o(|x|^{-2})
\end{equation}
as $|x| \to \infty$, where $C_1$ and $C_2$ are
constant vectors in ${\mathbb C}^4$.
Then  $C_1 =0$.
\end{thm}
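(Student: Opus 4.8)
The plan is to deduce Theorem \ref{thm:NER-2} from Theorem \ref{thm:NER-1} rather than reprove everything from scratch. First I would observe that the asymptotic expansion (\ref{eqn:R-2}) with leading term $C_1|x|^{-1}$ forces $f \in {\mathcal L}^{2,-s}$ for a suitable range of $s$: indeed $|x|^{-1} \in L^{2,-s}({\mathbb R}^3)$ near infinity precisely when $2(1-s) > 3$ fails, i.e.\ one needs $1-s < -1/2$... more carefully, $\langle x\rangle^{-s}|x|^{-1}$ is square integrable at infinity iff $2s + 2 > 3$, that is $s > 1/2$; and together with $f \in [L^2_{\rm loc}]^4$ this gives $f \in {\mathcal L}^{2,-s}$ for any $s \in (1/2, \min(1,\rho-1))$, a non-empty interval since $\rho > 3/2$. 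So the hypotheses of Theorem \ref{thm:NER-1} are met.

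Next, applying Theorem \ref{thm:NER-1} to this $f$, I get that actually $f \in [H^1({\mathbb R}^3)]^4 \subset {\mathcal K}$, and moreover one of the two components vanishes, say $\varphi^- = 0$ (the other case being symmetric). In particular $f$ is a genuine $L^2$ eigenfunction of the self-adjoint operator $H_A$ at the threshold $\pm m$. Now the point is to reconcile this with the expansion (\ref{eqn:R-2}): if $C_1 \neq 0$, then $f$ behaves like $C_1|x|^{-1}$ at infinity, which is \emph{not} in $L^2({\mathbb R}^3)$ — contradiction with $f \in [H^1]^4$. Hence $C_1 = 0$. This is essentially the whole argument; the only care needed is to make the first step ($f \in {\mathcal L}^{2,-s}$) and the last step (degree of integrability of $|x|^{-1}$) precise, both of which are elementary computations with $\langle x\rangle$-weights in ${\mathbb R}^3$.

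I expect the main (very minor) obstacle to be bookkeeping the exponents so that the interval for $s$ in Theorem \ref{thm:NER-1}, namely $0 < s < \min(1,\rho-1)$, genuinely overlaps the range $s > 1/2$ dictated by the $|x|^{-1}$ decay; this works because the assumption $\rho > 3/2$ gives $\min(1,\rho-1) > 1/2$. One should also note that the local regularity needed to even speak of the pointwise expansion is consistent: $f \in [L^2_{\rm loc}]^4$ satisfying $H_A f = \pm mf$ distributionally, plus the conclusion $f \in [H^1]^4$ from Theorem \ref{thm:NER-1}, make everything legitimate. No other machinery is required — Theorem \ref{thm:NER-2} is a corollary of Theorem \ref{thm:NER-1} once the weighted-space membership is checked.
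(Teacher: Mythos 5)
Your argument is correct and is essentially identical to the paper's own proof: the expansion (\ref{eqn:R-2}) places $f$ in ${\mathcal L}^{2,-s}$ for $s\in(1/2,\min(1,\rho-1))$, which is non-empty since $\rho>3/2$, so Theorem \ref{thm:NER-1} yields $f\in[H^1({\mathbb R}^3)]^4\subset{\mathcal K}$, and square-integrability is incompatible with a nonzero $C_1|x|^{-1}$ tail. The exponent bookkeeping you flag ($s>1/2$ from the $|x|^{-1}$ decay versus $s<\min(1,\rho-1)$ from Theorem \ref{thm:NER-1}) is exactly the only point that needs checking, and you check it correctly.
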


\begin{proof}
It follows from (\ref{eqn:R-2}) that
$f \in {\mathcal L}^{2,-s}$  for any $s$
with $1/2 < s < 1$. This fact, together with the 
assumptions of the theorem, enables us 
to apply Theorem \ref{thm:NER-1} and to conclude that
$f \in [H^1({\mathbb R}^3)]^4$. In particular, 
$f \in {\mathcal K}$, which leads to the fact that $C_1=0$.
\end{proof}

We shall give  a proof of Theorem \ref{thm:NER-1} 
only for $m$ resonances,
since the proof for $-m$ resonances is similar.
Roughly speaking, we will mimick the idea of the proof of 
assertion (i) of Theorem \ref{thm:SSDO-main1}.
Therefore
we need 
the Weyl-Dirac operator
$T_{\!A}=\sigma \cdot (D - A(x))$ again.
However, we are not allowed to use the Weyl-Dirac operator as 
a self-adjoint operator in the Hilbert 
space ${\mathcal H}$,
but only allowed to use it as a formal 
differential operator instead.
This is because
$\pm m$ resonances do not belong to  ${\mathcal K}$.
This fact 
causes  complication, in a certain extent, in the proof
of 
 Theorem \ref{thm:NER-1}.

We begin the proof of Theorem \ref{thm:NER-1}
with a lemma whose proof will
be given after the proof of the theorem.
The proof  of the lemma is lengthy.

\vspace{6pt}

\begin{lem} \label{lem:resonlem1}
Under the hypotheses of Theorem \ref{thm:NER-1}, 
$\varphi^{\pm}$ have the following properties{\rm:}
\vspace{4pt}

{\rm (i)} $(\sigma \cdot D)\varphi^+ \in {\mathcal H}$, 
 $(\sigma \cdot A)\varphi^+\in
{\mathcal H}$ and
   $\varphi^- \in [H^1({\mathbb R}^3)]^2$.

\vspace{4pt}

{\rm (ii)}  
$\big( (\sigma \cdot D)\varphi^+, \, \varphi^- \big)_{\mathcal H}
=\big( (\sigma \cdot A)\varphi^+, \, \varphi^- \big)_{\mathcal H}$.
\end{lem}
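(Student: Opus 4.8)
The plan is to prove the two assertions of Lemma \ref{lem:resonlem1} by first extracting information from the distributional equation $H_A f = mf$, then using elliptic regularity/bootstrapping to upgrade the regularity of $\varphi^-$, and finally justifying an integration by parts. Writing out $H_A f = mf$ componentwise in the distributional sense gives the system
\begin{equation*}
\begin{cases}
(\sigma\cdot D)\varphi^- - (\sigma\cdot A)\varphi^- = 0, \\
(\sigma\cdot D)\varphi^+ - (\sigma\cdot A)\varphi^+ = 2m\,\varphi^-,
\end{cases}
\end{equation*}
exactly parallel to \eqref{eqn:SSDO-K5} in the supersymmetric setting, the difference being that now these identities hold only distributionally and $f\in\mathcal L^{2,-s}$ rather than in $\mathcal K$. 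The first step is therefore to read off from the first equation that $\varphi^-$ solves $(\sigma\cdot D)\varphi^- = (\sigma\cdot A)\varphi^-$ distributionally; since $\varphi^-\in[L^{2,-s}_{\rm loc}]^2\subset[L^2_{\rm loc}]^2$ and $A$ is bounded (indeed $|A(x)|\le C\langle x\rangle^{-\rho}$), the right-hand side is locally $L^2$, so $(\sigma\cdot D)\varphi^-$ is locally $L^2$ and elliptic regularity for the first-order elliptic operator $\sigma\cdot D$ gives $\varphi^-\in[H^1_{\rm loc}]^2$. To get the global statement $\varphi^-\in[H^1(\mathbb R^3)]^2$ one has to control the behavior at infinity: here I would use the decay $\rho>3/2$ together with $s<\rho-1$ to argue that $(\sigma\cdot A)\varphi^-\in[L^2(\mathbb R^3)]^2$ (since $\langle x\rangle^{-\rho}\cdot\langle x\rangle^{s}$ is square-integrable when $\rho-s>3/2$, which holds because $s<\rho-1\le\rho-1$ forces $\rho-s>1$, and one needs the sharper bookkeeping $\rho - s > 3/2$; this is where the quantitative hypotheses must be used carefully, possibly combined with a first bootstrap improving the weight on $\varphi^-$). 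Once $(\sigma\cdot D)\varphi^-\in[L^2(\mathbb R^3)]^2$ globally and $\varphi^-\in[L^2_{\rm loc}]^2$, a standard argument — multiplying by a cutoff and using that $\varphi^-$ has polynomial growth at worst — yields $\varphi^-\in[H^1(\mathbb R^3)]^2$.

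Once $\varphi^-\in[H^1(\mathbb R^3)]^2$ is established, I turn to the second equation: $(\sigma\cdot D)\varphi^+ = (\sigma\cdot A)\varphi^+ + 2m\varphi^-$ in the distributional sense. The term $2m\varphi^-$ is now in $[L^2]^2$. For $(\sigma\cdot A)\varphi^+$: since $\varphi^+\in[L^{2,-s}]^2$ and $|A|\le C\langle x\rangle^{-\rho}$, we get $|(\sigma\cdot A)\varphi^+|\le C\langle x\rangle^{-\rho}|\varphi^+|$, and $\langle x\rangle^{-\rho}\cdot\langle x\rangle^{s}\,\langle x\rangle^{-s}|\varphi^+|$ is the product of $\langle x\rangle^{-\rho+s}\in L^\infty$ (and in $L^2$ when $\rho-s>3/2$) with $\langle x\rangle^{-s}\varphi^+\in L^2$; choosing the split to land in $L^2$ requires $\rho - s > 3/2$, i.e. $s < \rho - 3/2$, which is compatible with $s<\min(1,\rho-1)$ when $\rho>3/2$ — this is the pointwise reason the hypothesis $\rho>3/2$ (as opposed to $\rho>1$) is needed. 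Hence $(\sigma\cdot A)\varphi^+\in[L^2(\mathbb R^3)]^2$, so $(\sigma\cdot D)\varphi^+\in[L^2(\mathbb R^3)]^2$ distributionally, and by the same cutoff-plus-elliptic-regularity argument $\varphi^+\in[H^1(\mathbb R^3)]^2$ — in particular $f\in[H^1(\mathbb R^3)]^4$, which is part of the conclusion of Theorem \ref{thm:NER-1} itself, and assertion (i) of the lemma follows (with $(\sigma\cdot D)\varphi^+\in\mathcal H$ and $(\sigma\cdot A)\varphi^+\in\mathcal H$ recorded along the way).

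For assertion (ii), the identity $\big((\sigma\cdot D)\varphi^+,\varphi^-\big)_{\mathcal H}=\big((\sigma\cdot A)\varphi^+,\varphi^-\big)_{\mathcal H}$ is just the pairing of the second equation $(\sigma\cdot D)\varphi^+ - (\sigma\cdot A)\varphi^+ = 2m\varphi^-$ against $\varphi^-$ combined with the first equation — more precisely, one wants to show $\big((\sigma\cdot D)\varphi^+,\varphi^-\big) - \big((\sigma\cdot A)\varphi^+,\varphi^-\big) = 2m\|\varphi^-\|^2$ and separately that this equals $0$; but actually the cleaner route, mirroring \eqref{eqn:SSDO-K6}, is: $\big((\sigma\cdot D)\varphi^+,\varphi^-\big)_{\mathcal H} = \big(\varphi^+,(\sigma\cdot D)\varphi^-\big)_{\mathcal H}$ by integration by parts (legitimate now that $\varphi^+\in H^1$, $\varphi^-\in H^1$, and $\sigma\cdot D$ is formally self-adjoint — one approximates with a cutoff $\chi(x/R)$ and lets $R\to\infty$, the commutator term $[\sigma\cdot D,\chi(\cdot/R)]$ being $O(1/R)$ in $L^\infty$ and killed by $L^2$ control of $\varphi^\pm$), and then $(\sigma\cdot D)\varphi^- = (\sigma\cdot A)\varphi^-$ from the first equation, so $\big(\varphi^+,(\sigma\cdot D)\varphi^-\big)_{\mathcal H} = \big(\varphi^+,(\sigma\cdot A)\varphi^-\big)_{\mathcal H} = \big((\sigma\cdot A)\varphi^+,\varphi^-\big)_{\mathcal H}$ using that $\sigma\cdot A$ is pointwise self-adjoint (Hermitian matrix) and bounded. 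The main obstacle throughout is the bookkeeping at infinity: making sure each multiplication by $A$ genuinely produces an $L^2$ function and that the integration by parts has no boundary contribution, which is precisely where the strengthened decay $\rho>3/2$ and the constraint $0<s<\min(1,\rho-1)$ are consumed; a possible technical subtlety is whether a single application of the weighted estimates suffices or whether one needs to iterate (bootstrap) the regularity/decay of $\varphi^-$ once before it can be fed into the equation for $\varphi^+$.
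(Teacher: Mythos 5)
The central gap is your passage from $\varphi^-\in[L^{2,-s}({\mathbb R}^3)]^2$ and $(\sigma\cdot D)\varphi^-=(\sigma\cdot A)\varphi^-\in\mathcal H$ to the global statement $\varphi^-\in[H^1({\mathbb R}^3)]^2$ by ``cutoff plus elliptic regularity plus polynomial growth''. Local elliptic regularity only gives $H^1_{\rm loc}$; the whole difficulty is global square integrability, and that is exactly the resonance phenomenon the theorem is about. A spinor behaving like $|x|^{-1}$ at infinity lies in $L^{2,-s}$ for every $s>1/2$, is not in $L^2$, and yet $\sigma\cdot D$ applied to it lies in $L^2$; so no soft argument of the type you describe can exclude such behavior --- one must use the equation and the decay $\rho>3/2$ quantitatively. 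The paper handles precisely this point by invoking Sait\={o} and Umeda \cite[Theorem 2.2]{SaitoUmeda2} (non-existence of zero resonances of the Weyl--Dirac operator), and that citation is where $\rho>3/2$ genuinely enters; your proposal never uses $\rho>3/2$ in a way that could substitute for it. Worse, you then run the same argument on $\varphi^+$ and assert $\varphi^+\in[H^1]^2$ already inside the lemma. That is circular and, at that stage, unjustified: $\varphi^+$ solves the inhomogeneous equation $\sigma\cdot(D-A)\varphi^+=2m\varphi^-$ with a right-hand side not yet known to vanish, and an $L^{2,-s}$ solution of such an equation has no reason to be in $L^2$. If your shortcut worked, assertion (ii), the deduction $\varphi^-=0$, and indeed most of the proof of Theorem \ref{thm:NER-1} would be superfluous. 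This is why the lemma claims only $(\sigma\cdot D)\varphi^+\in\mathcal H$ and $(\sigma\cdot A)\varphi^+\in\mathcal H$ for $\varphi^+$, and why $\varphi^+\in H^1$ is obtained in the paper only after $\varphi^-=0$ has been established, again via \cite[Theorem 2.2]{SaitoUmeda2}.

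This gap propagates into your treatment of (ii): you justify the integration by parts through ``$L^2$ control of $\varphi^\pm$'', but $\varphi^+$ is only in $L^{2,-s}$. The commutator/cutoff term must be estimated as in (\ref{eqn:resonLem-(ii)14}), namely by $O(n^{-1})$ times the weighted norm of $\varphi^+$ on the annulus $n\le|x|\le 2n$ (which contributes a factor $n^{s}$) times $\Vert\varphi^-\Vert_{\mathcal H}$, and it tends to zero exactly because $s<1$; this bookkeeping is not optional. Likewise, after transferring $\sigma\cdot D$ one cannot pair $\varphi^+$ against $(\sigma\cdot D)\varphi^-$ over all of ${\mathbb R}^3$, since that integral is not absolutely convergent for $\varphi^+\notin L^2$; the paper substitutes $(\sigma\cdot D)\varphi^-=(\sigma\cdot A)\varphi^-$ while the cutoff $\chi_n$ is still present, because only $(\sigma\cdot A)\varphi^+\in\mathcal H$ makes the limiting pairing legitimate. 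Finally, your recurring claim that $(\sigma\cdot A)\varphi^\pm\in L^2$ ``requires $\rho-s>3/2$'' is both unnecessary (writing $\langle x\rangle^{-\rho}|\varphi^\pm|=\langle x\rangle^{s-\rho}\cdot\langle x\rangle^{-s}|\varphi^\pm|$, an $L^\infty$ bound on $\langle x\rangle^{s-\rho}$ suffices) and not implied by the hypotheses: for $\rho=8/5$ and $s=11/20$ one has $s<\min(1,\rho-1)$ but $s>\rho-3/2$, so a proof that really needed $s<\rho-3/2$ would not cover the stated range of $s$.
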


\vspace{10pt}
\begin{proof}[{\bf Proof of Theorem \ref{thm:NER-1}}]
Let $f$ satisfy $H_Af = mf$ in the distributional sense.
Then we have
\begin{align}   \label{eqn:resonLem-1}
\begin{cases}
 m \varphi^+ + \sigma \cdot (D-A(x)) \varphi^-
&\!\!\!\!=m \varphi^+  \\
\noalign{\vskip 4pt}
\sigma \cdot (D-A(x)) \varphi^+ - m \varphi^-
&\!\!\!\!  =m \varphi^- 
\end{cases}
\end{align}
 in the distributional sense, which immediately implies 
\begin{equation}   \label{eqn:resonLem-2}
\sigma \cdot (D-A(x)) \varphi^-=0 
\end{equation}
and
\begin{equation}   \label{eqn:resonLem-3}
\sigma \cdot (D-A(x)) \varphi^+  =2m \varphi^-. 
\end{equation}
In view of Lemma \ref{lem:resonlem1}, 
it follows from (\ref{eqn:resonLem-3}) that
\begin{gather}
\begin{split}
4m^2 \Vert \varphi^- \Vert_{\mathcal H}^2
&= 2m \big( 2m \varphi^-, \, \varphi^-\big)_{\mathcal H}  \\
&= 2 m 
\big( \sigma \cdot (D-A) \varphi^+, \, \varphi^-\big)_{\mathcal H}  \\
&=2m \big\{
\big( (\sigma \cdot D) \varphi^+, \, \varphi^-\big)_{\mathcal H}  
 - \big( (\sigma \cdot A) \varphi^+, \, \varphi^-\big)_{\mathcal H} 
\big\} \\
&=0.
\end{split}
\end{gather}
Hence $\varphi^-=0$. This fact, together with (\ref{eqn:resonLem-3}),
means that 
\begin{equation} \label{eqn:resonLem-4}
\sigma \cdot (D-A(x)) \varphi^+  =0
\end{equation}
in  the distributional sense.  
It follows from Sait\={o} and Umeda \cite[Theorem 2.2]{SaitoUmeda2}
that $\varphi^+ \in [H^1({\mathbb R}^3)]^2$. 
(Note that
the hypothesis $0 < s < \min (1, \, \rho -1)$ is stronger
than the one imposed in \cite[Theorem 2.2]{SaitoUmeda2}.)
This implies that $f \in [H^1({\mathbb R}^3)]^4$, 
because $\varphi^-=0$ as was shown above.
\end{proof}

\vspace{10pt}

Before proving Lemma \ref{lem:resonlem1},
we should like to remark that (\ref{eqn:resonLem-2})
and (\ref{eqn:resonLem-3}) follow directly
from the hypothesis that $H_A f = m f$
in the distributional sense.
Therefore we are allowed to use 
(\ref{eqn:resonLem-2})
and (\ref{eqn:resonLem-3}) in the proof of 
Lemma \ref{lem:resonlem1} below.

\vspace{10pt}
\begin{proof}[{\bf Proof of Lemma \ref{lem:resonlem1}}]
Since $\rho - s >1$ by assumption, we see that 
\begin{equation}  \label{eqn:resonLem-5}
(\sigma \cdot A) \varphi^+ \in 
[L^{2, \, \rho - s}({\mathbb R}^3)]^2 \subset {\mathcal H}.
\end{equation}
It follows from (\ref{eqn:resonLem-2}) 
and \cite[Theorem 2.2]{SaitoUmeda2}
that $\varphi^- \in [H^1({\mathbb R}^3)]^2$.
This fact, together with (\ref{eqn:resonLem-3})
and (\ref{eqn:resonLem-5}), 
implies that 
$(\sigma \cdot D)\varphi^+ 
= 2m\varphi^- + (\sigma \cdot A) \varphi^+ 
\in
\mathcal H$. Thus assertion (i) is proved.

In order to prove assertion (ii), we need to
introduce a cutoff function. Let $\chi$ be a
function in $C^{\infty}(\mathbb R)$
 such that $0 \le \chi \le 1$, $\chi(r) = 1$ ($r \le 1$),
and $\chi(r) = 0$ ($r \ge 2$). Set 
\begin{equation} \label{eqn:resonLem-(ii)1}
\chi_n(x) = \chi(|x|/n) \qquad (n=1, \,2, \, 3, \, \cdots ).
\end{equation}
It is evident that
\begin{equation}  \label{eqn:resonLem-(ii)2}
\big( (\sigma \cdot D)\varphi^+, \, \varphi^- \big)_{\mathcal H}
= \lim_{n \to \infty}
\big( (\sigma \cdot D)\varphi^+, \, \chi_n\varphi^- \big)_{\mathcal H}.
\end{equation}

Let $\{j_{\varepsilon}\}_{0<\varepsilon <1}$ be Friedrichs' mollifier,
 {\it i.e.},
$j_{\epsilon}(x):= {\varepsilon}^{-3}j(x/\varepsilon)$,
where $j \in C_0^{\infty}({\mathbb R}^3)$ and 
$\Vert j \Vert_{L^1}=1$.
Since $\chi_n\varphi^- \in \mathcal H$, we see that
$j_{\varepsilon}\ast(\chi_n\varphi^-)$ converges to
 $\chi_n\varphi^-$ in $\mathcal H$ as $\varepsilon \downarrow 0$.
Hence, for each $n$, we have
\begin{equation}  \label{eqn:resonLem-(ii)3}
\big( (\sigma \cdot D)\varphi^+, \, \chi_n\varphi^- \big)_{\mathcal H}
= \lim_{\varepsilon \downarrow 0}
\big( (\sigma \cdot D)\varphi^+, \, 
j_{\varepsilon}\ast(\chi_n\varphi^-) \big)_{\mathcal H}.
\end{equation}
It is straightforward that 
$j_{\varepsilon}\ast(\chi_n\varphi^-) \in [C_0^{\infty}({\mathbb R}^3)]^2$
and that
\begin{equation}  \label{eqn:resonLem-(ii)4}
\mbox{supp}[j_{\varepsilon}\ast(\chi_n\varphi^-)] 
\subset 
\big\{ \, x \, \big| \, |x| \le 2n +1 \, \big\}.
\end{equation}
Appealing to the definition of the distributional derivatives,
we get
\begin{equation}   \label{eqn:resonLem-(ii)5}
\big( (\sigma \cdot D)\varphi^+, \, 
j_{\varepsilon}\ast(\chi_n\varphi^-) \big)_{\mathcal H}
= 
\int_{{\mathbb R}^3} 
\big( 
\varphi^+(x), \, 
(\sigma \cdot D)(j_{\varepsilon}\ast(\chi_n\varphi^-)) (x)
\big)_{{\mathbb C}^2} \, dx.  
\end{equation}
For each $n$ and $\varepsilon$, we have
\begin{gather}   \label{eqn:resonLem-(ii)6}
\begin{split}
(\sigma \cdot D)(j_{\varepsilon}\ast(\chi_n\varphi^-)) (x)
&=
\int_{{\mathbb R}^3} 
(\sigma \cdot D_x) \big( j_{\varepsilon}(x - y) \big) \,
 \chi_n(y) \varphi^-(y) \, dy  \\
&=
\int_{{\mathbb R}^3} 
- (\sigma \cdot D_y) \big( j_{\varepsilon}(x - y)  \big) \,
 \chi_n(y) \varphi^-(y) \, dy  \\
&=
\int_{{\mathbb R}^3} 
j_{\varepsilon}(x - y)  \,
 (\sigma \cdot D_y) \big( \chi_n(y) \varphi^-(y) \big)   \, dy   \\
\noalign{\vskip 4pt}
&=
j_{\varepsilon} \ast
\big\{
 (\sigma \cdot D) ( \chi_n \varphi^-)
\big\} (x).
\end{split}
\end{gather}
In the third equality of (\ref{eqn:resonLem-(ii)6})
we have regarded 
$j_{\varepsilon}(x -\cdot)$ as a function in $C_0^{\infty}({\mathbb R}_y^3)$ 
and have appealed to the definition 
of the destributional derivatives with respect
to $y$ variable.
Note that
\begin{equation}   \label{eqn:resonLem-(ii)7}
(\sigma \cdot D) ( \chi_n \varphi^-)
= \{ (\sigma \cdot D) \chi_n \} \varphi^-
+   \chi_n (\sigma \cdot D) \varphi^-.
\end{equation}
Combining (\ref{eqn:resonLem-(ii)5}), (\ref{eqn:resonLem-(ii)6})
and (\ref{eqn:resonLem-(ii)7}), we obtain
\begin{gather}   \label{eqn:resonLem-(ii)8}
\begin{split}
\big( (\sigma \cdot D)\varphi^+, \, 
j_{\varepsilon}\ast(\chi_n\varphi^-) \big)_{\mathcal H}
&= 
\int_{{\mathbb R}^3} 
\big( 
\varphi^+(x), \, 
j_{\varepsilon}\ast \big[ \{(\sigma \cdot D)\chi_n \}\,\varphi^-\big]
(x)
\big)_{{\mathbb C}^2} \, dx    \\
&\quad +
\int_{{\mathbb R}^3} 
\big( 
\varphi^+(x), \, 
j_{\varepsilon}\ast \big[\chi_n \, (\sigma \cdot D)\varphi^- \big] (x)
\big)_{{\mathbb C}^2} \, dx
\end{split}
\end{gather}
We examine the limit of each integral on the
right hand side of (\ref{eqn:resonLem-(ii)8})
as $\varepsilon \downarrow 0$.
As for the first integral, we have
\begin{gather}   \label{eqn:resonLem-(ii)9}
\begin{split}
{}&\qquad \Big|
\int_{{\mathbb R}^3} 
\big( 
\varphi^+(x), \, 
j_{\varepsilon} \ast  \big[ \{(\sigma \cdot D)\chi_n \}\,\varphi^-\big]
(x)
\big)_{{\mathbb C}^2} \, dx    \\
& \qquad \qquad \qquad \qquad \qquad   -
\int_{{\mathbb R}^3} 
\big( 
\varphi^+(x), \, 
 \{(\sigma \cdot D)\chi_n \}(x)\,\varphi^-(x)
\big)_{{\mathbb C}^2} \, dx  \,
\Big|  \\
{}&\le
\int_{|x|\le 2n +1} 
\big| \varphi^+(x) \big|_{{\mathbb C}^2} \\
{}& \qquad \qquad \times \big| j_{\varepsilon} \ast  
\big[ \{(\sigma \cdot D)\chi_n \}\,\varphi^-\big](x)
-
\{(\sigma \cdot D)\chi_n \}(x)\,\varphi^-(x)  \big|_{{\mathbb C}^2} \, dx  \\
\noalign{\vskip 7pt}
{}&\le
\big\Vert
\big| \varphi^+ \big|_{{\mathbb C}^2}
\big\Vert_{L^2(|x|\le 2n+1)} \;
\big\Vert
j_{\varepsilon} \ast  
\big[ \{(\sigma \cdot D)\chi_n \}\,\varphi^-\big]
-
\{(\sigma \cdot D)\chi_n \}\,\varphi^-
\big\Vert_{\mathcal H}   \\
\noalign{\vskip 4pt}
{}& \quad \to 0  \quad (\varepsilon \downarrow 0),
\end{split}
\end{gather}
since $\{(\sigma \cdot D)\chi_n \}\,\varphi^- \in \mathcal H$.
In the first inequality (\ref{eqn:resonLem-(ii)9}) we have 
used the Schwarz inequality in ${\mathbb C}^2$,
and 
in the second inequality the Schwarz inequality in $L^2$.
Therefore
\begin{gather}   \label{eqn:resonLem-(ii)10}
\begin{split}
{}&\lim_{\varepsilon \downarrow 0}
\int_{{\mathbb R}^3} 
\big( 
\varphi^+(x), \, 
j_{\varepsilon}\ast \big[ \{(\sigma \cdot D)\chi_n \}\,\varphi^-\big]
(x)
\big)_{{\mathbb C}^2} \, dx  \\
{}&\qquad\quad  =
\int_{{\mathbb R}^3} 
\big( 
\varphi^+(x), \, 
  \{(\sigma \cdot D)\chi_n \}(x) \,\varphi^-(x)
\big)_{{\mathbb C}^2} \, dx.
\end{split}
\end{gather}
In a similar manner, we see that
\begin{gather}   \label{eqn:resonLem-(ii)11}
\begin{split}
{}&\lim_{\varepsilon \downarrow 0}
\int_{{\mathbb R}^3} 
\big( 
\varphi^+(x), \, 
j_{\varepsilon}\ast \big[ \chi_n \,(\sigma \cdot D)\varphi^-\big](x)
\big)_{{\mathbb C}^2} \, dx  \\
{}&\qquad\quad  =
\int_{{\mathbb R}^3} 
\big( 
\varphi^+(x), \, 
  \chi_n (x) \, (\sigma \cdot D)\varphi^-(x)
\big)_{{\mathbb C}^2} \, dx,
\end{split}
\end{gather}
where we have used the fact 
that $\varphi^- \in [H^1({\mathbb R}^3)]^2$. (Recall that
this fact was shown in assertion (i) of the lemma.)
It follows from (\ref{eqn:resonLem-(ii)3}), 
(\ref{eqn:resonLem-(ii)8}),
(\ref{eqn:resonLem-(ii)10}) and  (\ref{eqn:resonLem-(ii)11})
that
\begin{gather}   \label{eqn:resonLem-(ii)12}
\begin{split}
\big( 
(\sigma \cdot D)\varphi^+, \, \chi_n\varphi^- 
   \big)_{\mathcal H}  
& =
\int_{{\mathbb R}^3} 
\big( 
\varphi^+(x), \, 
  \{(\sigma \cdot D)\chi_n \}(x) \,\varphi^-(x)
\big)_{{\mathbb C}^2} \, dx    \\
\noalign{\vskip 4pt}
{}&\qquad  +
\int_{{\mathbb R}^3} 
\big( 
\varphi^+(x), \, 
  \chi_n (x) \, (\sigma \cdot D)\varphi^-(x)
\big)_{{\mathbb C}^2} \, dx.
\end{split}
\end{gather}

To estimate the first integral on the right hand side
of (\ref{eqn:resonLem-(ii)12}), we need the fact 
that
\begin{equation}   \label{eqn:resonLem-(ii)13}
\{(\sigma \cdot D)\chi_n \}(x) 
 = \frac{1}{\,n \,} \, 
\chi^{\prime}\Big(\frac{|x|}{n} \Big) \,
\frac{1}{i} (\sigma \cdot  \omega) 
\qquad (\,\omega=x/|x| \,).
\end{equation}
Note that $\mbox{supp}[(\sigma \cdot D)\chi_n ]
\subset \big\{ \, x \, \big| \, n \le |x| \le 2n \, \big\}$
and that $\sigma\cdot \omega$ is a unitary matrix.
Hence we have
\begin{gather}   \label{eqn:resonLem-(ii)14}
\begin{split}
{}&\left|
\int_{{\mathbb R}^3} 
\big( 
\varphi^+(x), \, 
  \{(\sigma \cdot D)\chi_n \}(x) \,\varphi^-(x)
\big)_{{\mathbb C}^2} \, dx \, 
\right|  \\
\noalign{\vskip 4pt}
{}& \quad \le
\frac{1}{\, n \,} 
\Big( \sup_{r>0}|\chi^{\prime}(r)| \Big)
\int_{n \le |x| \le 2n}
\big\vert \varphi^+(x) \big\vert_{{\mathbb C}^2}
\,
\big\vert \varphi^-(x) \big\vert_{{\mathbb C}^2} \, dx \\
\noalign{\vskip 4pt}
& \quad \le
\frac{1}{\, n \,} 
\Big( \sup_{r>0}|\chi^{\prime}(r)| \Big) \,
\Big\{
\int_{n \le |x| \le 2n}
\langle x \rangle^{-2s}
\big\vert \varphi^+(x) \big\vert^2_{{\mathbb C}^2} 
\, dx \Big\}^{\!1/2}    \\
\noalign{\vskip 2pt}
{}& \qquad \qquad \times
\Big\{
\int_{n \le |x| \le 2n}
\langle x \rangle^{2s}
\big\vert \varphi^-(x) \big\vert^2_{{\mathbb C}^2} 
\, dx \Big\}^{\!1/2}       \\
\noalign{\vskip 4pt}
& \quad \le
\frac{1}{\, n \,}   
\Big( \sup_{r>0}|\chi^{\prime}(r)| \Big) \,
\big\Vert  
\big| \varphi^+ \big\vert_{{\mathbb C}^2} 
\big\Vert_{L^{2, \, -s} }
\times
(1 + 4n^2)^{s/2} 
\big\Vert  \varphi^- \big\Vert_{\mathcal H}   \\
\noalign{\vskip 4pt}
& \quad \le
\mbox{const.} \,
n^{-1 +s}  \,
\big\Vert  
\big| \varphi^+ \big\vert_{{\mathbb C}^2} 
\big\Vert_{L^{2, \, -s} } \,
\big\Vert  \varphi^- \big\Vert_{\mathcal H}   \\
\noalign{\vskip 4pt}
{}& \qquad  \to 0  \quad ( \, n \to \infty ),
\end{split}
\end{gather}
since $s < 1$ by assumption of Theorem \ref{thm:NER-1}.
Thus the first integral on the right hand side
of (\ref{eqn:resonLem-(ii)12}) tends to $0$ 
as $n \to \infty$.

We now investigate the limit of 
the second integral on the right hand side
of (\ref{eqn:resonLem-(ii)12}) 
as $n \to \infty$.
It follows from (\ref{eqn:resonLem-2}) that
\begin{gather}  \label{eqn:resonLem-(ii)15}
\begin{split}
{}&
\int_{{\mathbb R}^3} 
\big( 
\varphi^+(x), \, 
  \chi_n(x) \,(\sigma \cdot D)\varphi^-(x)
\big)_{{\mathbb C}^2} \, dx    \\
& \qquad \qquad \qquad \qquad \qquad   -
\int_{{\mathbb R}^3} 
\big( 
(\sigma \cdot A)(x) \, \varphi^+(x), \, 
  \varphi^-(x)
\big)_{{\mathbb C}^2} \, dx  \,   \\
\noalign{\vskip 4pt}
{}& \quad =
\int_{{\mathbb R}^3} 
\big( 
 \big( \chi_n(x) -1 \big)
  \,(\sigma \cdot A)(x) \varphi^+(x), \, 
 \varphi^-(x) 
\big)_{{\mathbb C}^2} \, dx  , 
\end{split}
\end{gather}
where we have used the fact
that $(\sigma \cdot A)(x)$ is a Hermitian matrix 
for each $x$.
Noting (\ref{eqn:resonLem-5}), we find that
the absolute value of the right hand side of
(\ref{eqn:resonLem-(ii)15})
 is less than or equal to
\begin{equation}  
\big\Vert  
(\chi_n -1) (\sigma \cdot A) \varphi^+ 
\big\Vert_{\mathcal H} \,
\big\Vert 
 \varphi^- 
\big\Vert_{\mathcal H},  
\end{equation}
which obviously tends to $0$ as $n\to \infty$. 
Combining this  fact with (\ref{eqn:resonLem-(ii)12}),
 (\ref{eqn:resonLem-(ii)14}) and
 (\ref{eqn:resonLem-(ii)15}),
we obtain
\begin{equation}  \label{eqn:resonLem-(ii)17}
\lim_{n \to \infty} 
\big( 
(\sigma \cdot D)\varphi^+, \, \chi_n\varphi^- 
   \big)_{\mathcal H} 
=
\big( 
(\sigma \cdot A)\varphi^+, \, \varphi^- 
   \big)_{\mathcal H} .
\end{equation}
Assertion (ii) of the lemma is a direct
consequence of (\ref{eqn:resonLem-(ii)2})
and (\ref{eqn:resonLem-(ii)17}).
\end{proof}

\vspace{10pt}

\section{Exapmles, concluding remarks and an open question}  
\label{sec:Concludings}

We shall give examples of vector potentials
$A(x)$ which yield $\pm m$ modes but
do not give rise to $\pm m$ resonances. 
The basic idea in this section is to
exploit the equivalences (\ref{eqn:SSDO-K4}), 
(\ref{eqn:SSDO-K4+1}), and
 to apply Theorem \ref{thm:NER-1}.
 It turns out that
 beautiful spectral properties are
in common to all the examples of the magnetic 
Dirac operators in this
section. See properties
(i) -- (iv) of Examples \ref{expl:LY} and \ref{expl:AMN}.

\vspace{10pt}

\begin{expl}[\textbf{Loss-Yau}]  \label{expl:LY}
{\rm
Let 
\begin{align}      \label{eqn:LY-1}
A_{{}_{LY}}(x) = 3 \langle x \rangle^{-4} 
  \big\{ (1-|x|^2)  w_0 + 2 (w_0 \cdot x) x + 2 w_0 \times x \big\}
\end{align}
where
$\langle x \rangle = \sqrt{1 + |x|^2 \,}$,
$\phi_0 = {}^t(1, \, 0)$ {\rm (}$\phi_0$ can
be any unit vector in ${\mathbb C}^2${\rm )}, 
and
\begin{equation}      \label{eqn:LY-1-1}
w_0   =  \phi_0 \cdot (\sigma \phi_0 )
:=\big( 
(\phi_0, \, \sigma_1 \phi_0)_{\!_{{\mathbb C}^2}}, \, 
  (\phi_0, \, \sigma_2 \phi_0)_{\!_{{\mathbb C}^2}}, \,
(\phi_0, \, \sigma_3 \phi_0)_{\!_{{\mathbb C}^2}}
 \big).
\end{equation}
Here $w_0 \cdot x$ and $w_0 \times x$
denotes the inner product  and 
the exterior 
product of
$\,{\mathbb R}^3$ respectively.
Then the magnetic Dirac operator
$$
H_{{}_{LY}}:=H_{\!A_{LY}} = \alpha \cdot (D - A_{{}_{LY}}(x) ) + m
\beta
$$ 
has  the following properties:
\begin{itemize}
\item[(i)] $\sigma(H_{{}_{LY}})=
   \sigma_{\rm ess}(H_{{}_{LY}})=
(-\infty, \, -m] \cup [m, \,\infty)${\rm;} 
\item[(ii)] $H_{{}_{LY}}$ has $\pm m$ modes. Moreover, the point spectrum of $H_{{}_{LY}}$ consists only of $\pm m$,
i.e. $\sigma_p(H_{{}_{LY}})=\{ -m,\, m\}${\rm;} 
\item[(iii)] $H_{{}_{LY}}$ does not have $\pm m$ resonances{\rm;}
\item[(iv)] $H_{{}_{LY}}$ is absolutely continuous on 
$(-\infty, \, -m) \cup (m,\, \infty)$.
\end{itemize}

We shall show these properties one-by-one.
It is easy to see that $-\sigma \cdot
A_{{}_{LY}}(x)$  is  relatively compact perturbation of  $T_0=\sigma \cdot D$,
hence the Weyl-Dirac operator 
$$
T_{{}_{LY}}:= T_{\!A_{LY}} =\sigma \cdot (D - A_{{}_{LY}}(x))
$$
is a self-adjoint
operator in the Hilbert space
${\mathcal H}=\big[L^2({\mathbb R}^3)\big]^2$ with 
the domain $\big[H^1({\mathbb R}^3)\big]^2$. 
Since the spectrum of the operator $T_0$
equals the whole real line,
we see that $\sigma(T_{{}_{LY}})=\mathbb R$. Property {\rm(i)} immediately follows from Theorem
\ref{thm:SSDO-main3}.

We shall show property {\rm(ii)}. 
According to Loss and Yau \cite[section II]{LossYau}, 
the Weyl-Dirac operator 
$T_{{}_{LY}}$
has a zero mode $\varphi_{{}_{LY}}$ defined by
\begin{equation}    \label{eqn:LY-2}
\varphi_{{}_{LY}}(x) = \langle x \rangle^{-3}
 \big( I_2 + i \sigma \cdot x \big) \phi_0. 
\end{equation}
It follows from  {\rm(\ref{eqn:SSDO-K4})} and
{\rm(\ref{eqn:SSDO-K4+1})}
 that ${}^t(\varphi_{{}_{LY}}, \, 0 )$ 
{\rm(}resp. ${}^t(0, \,\varphi_{{}_{LY}}) )$ is 
an $m$ mode {\rm(}resp. a $-m$ mode{\rm)} of $H_{{}_{LY}}$. 
Hence $\sigma_p(H_{{}_{LY}}) \supset \{-m, \, m\}$.
On the other hand, it follows from 
Yamada \cite{Yamada-1} 
that $H_{{}_{LY}}$ has no eigenvalue in 
$(-\infty, -m) \cup (m, \,\infty)$.
{\rm(}Note that the vector potential $A_{{}_{LY}}$ 
satisfies the assumption of 
\cite[Proposition 2.5]{Yamada-1}.{\rm)}
This fact, together with property {\rm(i)}, implies that
 $\sigma_p(H_{{}_{LY}}) \subset \{-m, \, m\}$. Summing up, 
we get property {\rm(ii)}.
Since $|A_{{}_{LY}}(x)|\le C \langle x \rangle^{-2}$, 
property {\rm(iii)} follows from Theorem \ref{thm:NER-1}.
Property {\rm(iv)} is a direct consequence of 
Yamada \cite[Corollary 4.2]{Yamada-1}.
As for absolutely continuity and 
limiting absorption principle for
Dirac operators, see also Yamada \cite{Yamada-2},
Balslev and Helffer \cite{BalslevHellfer}, and
Pladdy, Sait{\=o} and Umeda \cite{PladdySaitoUmeda}.
}
\end{expl}

 \vspace{3pt}

\begin{rem}{\rm
Since $A_{{}_{LY}}$ is $C^{\infty}$, one can apply 
Thaller \cite[p. 195, Theorem
7.1]{Thaller} to conclude that 
${}^t(\varphi_{{}_{LY}}, \, 0 )$ 
{\rm(}resp. ${}^t(0, \,\varphi_{{}_{LY}}) )$ is 
an $m$ mode {\rm(}resp. a $-m$ mode{\rm)} of $H_{{}_{LY}}$.
 This fact
is also mentioned in Thaller \cite{Thaller1}.
}
\end{rem}

 \vspace{3pt}

\begin{rem}{\rm
As was pointed out in Loss and Yau \cite[section II]{LossYau},
one sees that $\mbox{div}  A_{{}_{LY}}\not=0$, and
one can find,
by a gauge transformation,
 a vector potential $\tilde{A}_{{}_{LY}}$
which satisfies $\mbox{div} \tilde{A}_{{}_{LY}}=0$ and
$\mbox{rot} \tilde{A}_{{}_{LY}} = \mbox{rot} A_{{}_{LY}}$
and yields a  zero mode $\tilde{\varphi}_{{}_{LY}}$
of $\sigma \cdot (D - \tilde{A}_{{}_{LY}}(x))$.
In fact, defining 
\begin{equation*}
\tilde{A}_{{}_{LY}}:= A_{{}_{LY}} + \nabla \chi_{{}_{LY}},
\quad
\tilde{\varphi}_{{}_{LY}}:= e^{i\chi_{{}_{LY}}}\varphi_{{}_{LY}}
\end{equation*}
with 
\begin{equation*}
\chi_{{}_{LY}}(x) := \frac{1}{\, 4\pi \,}
\int_{{\mathbb R}^3} \frac{1}{\, |x-y|\, }
(\mbox{div} A_{{}_{LY}})(y) \, dy,
\end{equation*}
we observe that $\tilde{A}_{{}_{LY}}$ and $\tilde{\varphi}_{{}_{LY}}$
have the desired properties mentioned above.
Moreover, we can show that
$|\tilde{A}_{{}_{LY}}(x)| \le C \langle x \rangle^{-2} 
(\in L^6({\mathbb R}^3))$. 
Hence, the magnetic Dirac operator 
$H_{\!\tilde{A}_{LY}} = \alpha \cdot (D - \tilde{A}_{{}_{LY}}(x) ) + m$
shares the properties (i) - (iv) of Example \ref{expl:LY} 
with $H_{\!A_{LY}} = \alpha \cdot (D - \tilde{A}_{{}_{LY}}(x) ) + m$.
This same idea is applicable to
the vector potentials $A^{(\ell)}$ 
in Example \ref{expl:AMN} below.
}
\end{rem}

\vspace{3pt}

\begin{expl}[\textbf{Adam-Muratori-Nash}]  \label{expl:AMN}
{\rm
In the same spirit as in Example \ref{expl:LY},
we can show the existence of countably infinite
number of vector potentials with which the magnetic Dirac
operators have the properties {\rm (i) -- (iv)} in
Example \ref{expl:LY}.

In fact,
we shall exploit a result on the Weyl-Dirac 
operator by Adam, Muratori and Nash \cite{AdamMuratoriNash1},
where they construct a series 
 of 
vector potentials $A^{(\ell)}$
$(\ell = 0$, $1$, $2$, $\cdots)$, each of  which gives rise a zero
mode $\psi^{(\ell)}$ of the Weyl-Dirac operator 
$T^{(\ell)}:= \sigma \cdot (D - A^{(\ell)}(x) )$.
The idea of  \cite{AdamMuratoriNash1} is an extension  of 
that of  Loss and Yau \,\cite[section II]{LossYau}{\rm;} Indeed 
$A^{(0)}$ and $\psi^{(0)}$ give the same
vector potential and zero mode as in
{\rm(\ref{eqn:LY-1})} and  {\rm(\ref{eqn:LY-2})}.
For $\ell \ge 1$, the construction of the zero mode
$\psi^{(\ell)}(x)$ is based on an anzatz {\rm(}see {\rm(7)} in section II of
{\rm \cite{AdamMuratoriNash1})} and 
the definition of $A^{(\ell)}$  is given by
\begin{align}      
A^{(\ell)}(x) =
\frac{h^{(\ell)}(x)}{| \psi^{(\ell)}(x)|^2}
\{ \psi^{(\ell)}(x) \cdot (\sigma \psi^{(\ell)}(x)) \},  \label{eqn:AMN-2}
\end{align}
where $h^{(\ell)}(x)$ is a real valued function defined
as
\begin{equation}
h^{(\ell)}(x)= \frac{c_{\ell}}{\langle x \rangle^2 }  \quad (c_{\ell} \;
\mbox{  a real constant depending only on } \ell)
\end{equation}
and 
$\psi^{(\ell)}(x) \cdot (\sigma \psi^{(\ell)}(x))$ is 
defined in the same way as in {\rm(\ref{eqn:LY-1-1})}.
(For the definition of $h^{(\ell)}(x)$, see Sait\={o} 
and Umeda \cite{SaitoUmeda3}.)
By the same arguments as in 
Example \ref{expl:LY}, we can deduce that
 the magnetic Dirac operator 
$H^{(\ell)}:= \alpha \cdot (D - A^{(\ell)}(x) ) + m
\beta$, $\;\ell = 0$, $1$, $2$, $\cdots$, has 
 the properties  {\rm (i) -- (iv)}
of
Example \ref{expl:LY}.
}
\end{expl}

\vspace{5pt}

Section \ref{sec:AsymptoticLimits} was based upon our results on
supersymmetric Dirac operators in section \ref{sec:Supersymmetric} 
of the
present paper
 and those of
Sait\=o and Umeda 
\cite{SaitoUmeda1}.
It turned out that all $\pm m$ mode have the 
same asymptotic limit at infinity, i.e. $\asymp |x|^{-2}$
as $|x| \to \infty$. This means that
the asymptotic limits of $\pm m$ modes
of the mangetic Dirac operator 
are the same as those of zero modes of
the Weyl-Dirac operator.
Section \ref{sec:Sparseness}
 was based upon our results on supersymmetric
Dirac operators in section \ref{sec:Supersymmetric} 
 and those of Balinsky and Evans \cite{BalinEvan2}
 on the Weyl-Dirac operator.
Section \ref{sec:Structure} 
was based upon our results on supersymmetric
Dirac operators in section \ref{sec:Supersymmetric}
 and those of Elton \cite{Elton} on the Weyl-Dirac operator.
 In each section from sections \ref{sec:AsymptoticLimits} to  
\ref{sec:Structure}, 
we  made a different assumption on 
the vector potentials. It is meaningful to
compare these assumptions with each other.
To this end, imitating (\ref{eqn:eltonAssump}),
we introduce
the following notation
\begin{align*}   
{\mathcal A}_{SU}:=&
\{ \, A \, | \, A \mbox{ satisfies Assumption(SU)} \, \},  \\
{\mathcal A}_{BE}:=&
\{ \, A \, | \, A \mbox{ satisfies Assumption(BE)} \, \}.
\end{align*}
We then have
\begin{align*}
&\qquad\quad {\mathcal A}_{SU} \subsetneqq {\mathcal A}_{BE},   
 \\
&{\mathcal A}\setminus{\mathcal A}_{SU}\not=\emptyset,
\quad
{\mathcal A}_{SU}\setminus{\mathcal A}\not=\emptyset,
 \\ 
&{\mathcal A}\setminus{\mathcal A}_{BE}\not=\emptyset,
\quad
{\mathcal A}_{BE} \setminus {\mathcal A} \not=\emptyset.
\end{align*}
In sections \ref{sec:Sparseness} and \ref{sec:Structure}, 
it was shown that
the set of vector potentials which give rise to
$\pm m$ modes is scarce in  each regime. 
The non-existence of $\pm m$ resonances
was proved in section \ref{sec:Resonances} 
under the assumption that 
$|A_j(x)| \le C \langle x \rangle^{-\rho}$, $\rho > 3/2$.
Based on the results in section \ref{sec:Resonances} ,
it follows that all the examples of vector potentials 
in this section do not have $\pm m$ resonances.
A natural question arises:

\begin{center}
\textit{Is there 
a vector potential $A$ which satisfies 
$|A_j(x)| \le C \langle x \rangle^{-\rho}$, \\ 
$\rho > 0$, 
and yields $\pm m$ resonances 
of the magnetic Dirac operator $H_{\! A}$?} 
\end{center}


\vspace{15pt}

\textbf{Acknowledgments.} The authors would like to thank 
the referees for their valuable comments and constructive
suggestions
which led them to the consideration of 
threshold resonances and had them improve
 the paper.

\vspace{10pt}

\end{document}